\documentclass[11pt,reqno]{amsart}

\usepackage[T1]{fontenc}
\usepackage[utf8]{inputenc}
\usepackage{lmodern}
\usepackage{microtype}
\usepackage[margin=1in]{geometry}
\usepackage{amsmath,amssymb,mathtools}
\usepackage{enumitem}
\usepackage{xcolor}
\usepackage{hyperref}
\usepackage{aliascnt}
\usepackage[nameinlink,capitalise,noabbrev]{cleveref}

\hypersetup{
  colorlinks=true,
  linkcolor=blue!45!black,
  citecolor=magenta,
  urlcolor=blue!45!black,
  pdftitle={Fixed-particle-number optimizers for the Lieb--Oxford inequality},
  pdfauthor={Matthew Rosenzweig}
}

\allowdisplaybreaks
\setlist[itemize]{topsep=4pt,itemsep=2pt,parsep=1pt}
\setlist[enumerate]{topsep=4pt,itemsep=3pt,parsep=1pt}
\numberwithin{equation}{section}

\theoremstyle{plain}
\newtheorem{theorem}{Theorem}[section]
\newaliascnt{proposition}{theorem}
\newtheorem{proposition}[proposition]{Proposition}
\aliascntresetthe{proposition}
\newaliascnt{lemma}{theorem}
\newtheorem{lemma}[lemma]{Lemma}
\aliascntresetthe{lemma}
\newaliascnt{corollary}{theorem}

\aliascntresetthe{corollary}

\theoremstyle{definition}
\newaliascnt{definition}{theorem}
\newtheorem{definition}[definition]{Definition}
\aliascntresetthe{definition}

\theoremstyle{remark}
\newaliascnt{remark}{theorem}
\newtheorem{remark}[remark]{Remark}
\aliascntresetthe{remark}

\newcommand{\s}{{\mathsf{s}}}
\renewcommand{\d}{{\mathsf{d}}}
\newcommand{\diff}{\,\mathrm{d}}
\newcommand{\R}{\mathbb R}
\newcommand{\N}{\mathbb N}
\newcommand{\Pcal}{\mathcal P}

\newcommand{\Lcal}{\mathcal L}
\newcommand{\Wcal}{\mathcal W}
\newcommand{\Gcal}{\mathcal G}

\newcommand{\Acal}{\mathcal A}
\newcommand{\cLO}{c_{\mathrm{LO}}}
\newcommand{\supp}{\operatorname{supp}}
\newcommand{\Var}{\operatorname{Var}}
\newcommand{\E}{\mathbb E}

\newcommand{\norm}[1]{\left\lVert #1\right\rVert}
\newcommand{\one}{\mathbf 1}
\newcommand{\weak}{\rightharpoonup}
\newcommand{\weakstar}{\stackrel{\ast}{\rightharpoonup}}
\newcommand{\Sym}{\operatorname{Sym}}

\title[Fixed-Particle-Number Optimizers for the Lieb--Oxford Inequality]{Fixed-Particle-Number Optimizers for the Lieb--Oxford Inequality}
\author{Matthew Rosenzweig}
\address{Matthew Rosenzweig, Department of Mathematical Sciences, Carnegie Mellon University, 7127 Wean Hall, 5000 Forbes Avenue, Pittsburgh, PA 15213, USA}
\email{mrosenz2@andrew.cmu.edu}
\urladdr{https://matthewrosenzweigwork-max.github.io/}
\thanks{M.R. was supported by NSF grants DMS-2441170, DMS-2345533, and DMS-2342349.}
\begin{document}
\raggedbottom

\begin{abstract}
Let $\d\geq1$, $0<\s<\d$, and $N\geq1$.  We prove that the optimal fixed-particle-number constant $\Lambda_N(\s,\d)$ in the Riesz Lieb--Oxford inequality is attained and that these constants are strictly increasing in $N$.  The proof combines grand-canonical concentration--compactness with a strict one-particle extension.  After recentering, a limiting plan arising from a maximizing sequence may assign positive probability to several particle numbers and hence be grand-canonical.  A strict $N$-particle completion excludes this case, while the inequalities $\Lambda_N>\Lambda_k$ for $k<N$ exclude limits with a fixed lower particle number.  Once attainment at particle number $N$ is known, the compact-support theorem of Di Marino and Lelotte \cite{DiMarinoLelotte2026}, valid for all $0<\s<\d$, permits a non-product one-particle extension and yields $\Lambda_{N+1}(\s,\d)>\Lambda_N(\s,\d)$.  Together, these implications close an induction beginning at $N=1$.
\end{abstract}

\maketitle

\section{Introduction}\label{sec:intro}

\subsection{The fixed-particle-number problem and previous work}

Let $\d\geq1$ be an integer and let $0<\s<\d$.  For $N\in\N$, let $P$ be a symmetric probability measure on $(\R^\d)^N$, and, for $1\leq i\leq N$, let $\pi_i:(\R^\d)^N\to\R^\d$ denote the $i$-th coordinate projection.  Then
\begin{equation}\label{eq:rhoP-intro}
  \rho_P=\sum_{i=1}^N(\pi_i)_\#P,
  \qquad \rho_P(\R^\d)=N,
\end{equation}
is its one-body measure.  When this measure is absolutely continuous with respect to Lebesgue measure, we use the same notation $\rho_P$ for its density.  For such $P$, the indirect Riesz energy of $P$, denoted by $\mathcal E_{\mathrm{ind}}[P]$, obeys
\begin{align}
  \mathcal E_{\mathrm{ind}}[P]
  &:=\int_{(\R^\d)^N}\sum_{1\leq i<j\leq N}\frac1{|x_i-x_j|^\s}\diff P
  -\frac12\iint_{\R^\d\times\R^\d}
  \frac{\rho_P(x)\rho_P(y)}{|x-y|^\s}\diff x\diff y \notag\\
  &\geq-\cLO(\s,\d)\int_{\R^\d}\rho_P^{1+\s/\d}.
  \label{eq:Eind-intro}
\end{align}
Lieb first proved this inequality for the three-dimensional Coulomb interaction, corresponding to $(\s,\d)=(1,3)$ \cite{Lieb1979}.  Lieb and Oxford subsequently improved the universal constant \cite{LiebOxford1981}.  The corresponding power-law Riesz inequality is valid for every $\d\geq1$ and $0<\s<\d$; see \cite[Section~5.3]{LewinLiebSeiringer2023} and references therein.  Here, $\cLO(\s,\d)<\infty$ is independent of $N$.

For a density $\rho\geq0$ of mass $N$, the repulsive interaction at fixed one-body density is the multimarginal optimal-transport problem
\begin{equation}\label{eq:WN-intro}
  \Wcal_N(\rho)
  :=\min\left\{
  \int_{(\R^\d)^N}\sum_{1\leq i<j\leq N}\frac1{|x_i-x_j|^\s}\diff P:
  P\in\Pcal_{\rm sym}((\R^\d)^N),\ \rho_P=\rho
  \right\}.
\end{equation}
The rigorous theory of multimarginal transport with Coulomb and more general repulsive singular costs was developed in \cite{CotarFrieseckeKlueppelberg2013,DePascale2015,ButtazzoChampionDePascale2018,ColomboDiMarinoStra2019}; see also \cite{SeidlBenyahiaKooiGoriGiorgi2022} for an account directed toward density-functional theory.  Set
\begin{equation}\label{eq:p-intro}
  p:=1+\frac{\s}{\d},
\end{equation}
and define
\begin{equation}\label{eq:GN-intro}
  \Gcal_N(\rho):=\frac12D(\rho)-\Wcal_N(\rho),
  \qquad
  D(\rho):=\iint_{\R^\d\times\R^\d}\frac{\rho(x)\rho(y)}{|x-y|^\s}\diff x\diff y.
\end{equation}
By the definitions of $\Wcal_N$ and $\mathcal E_{\mathrm{ind}}$, $\inf_{\rho_P=\rho}\mathcal E_{\mathrm{ind}}[P]=-\Gcal_N(\rho)$.
The corresponding sharp constant is\footnote{We write $\Lambda_N(\s,\d)$ for the fixed-particle-number constant and reserve $\cLO(\s,\d)$ for the universal constant.  When $(\s,\d)=(1,3)$, $\Lambda_N(1,3)=C_N$ in Lieb and Oxford's notation.  Following Di Marino and Lelotte \cite{DiMarinoLelotte2026}, $D(\rho)$ denotes the full double integral, so the Hartree term is $\frac12D(\rho)$.  The restriction to symmetric plans entails no loss, since symmetrization preserves both the one-body density and the pair cost.}
\begin{equation}\label{eq:LambdaN-intro}
  \Lambda_N=\Lambda_N(\s,\d):=\sup_{\rho\in\Acal_N}\frac{\Gcal_N(\rho)}{\Lcal(\rho)},
  \qquad
  \Lcal(\rho):=\int_{\R^\d}\rho^p,
\end{equation}
where
\begin{equation}\label{eq:AN-intro}
  \Acal_N:=\left\{\rho\in L^1(\R^\d)\cap L^p(\R^\d):
  \rho\geq0,\ \int_{\R^\d}\rho=N\right\}.
\end{equation}
Here and below, an \emph{optimizer} is a density $\rho\in\Acal_N$ attaining the supremum in \eqref{eq:LambdaN-intro}, whereas an \emph{optimal plan} is a minimizer in \eqref{eq:WN-intro}.
Throughout the proof, $\s$ and $\d$ are fixed and we abbreviate $\Lambda_N(\s,\d)$ to $\Lambda_N$.  The universal Riesz constant is
\begin{equation}
  \cLO(\s,\d)=\sup_{N\geq1}\Lambda_N(\s,\d).
\end{equation}

In the three-dimensional Coulomb case, Gadre, Bartolotti, and Handy analyzed the variational problem defining the sharp one-particle constant $\Lambda_1(1,3)$, observed that its Euler--Lagrange equation is the Lane--Emden equation of order three, and obtained $\Lambda_1(1,3)\approx1.0918$ \cite{GadreBartolottiHandy1980}.  Lieb and Oxford subsequently proved that the supremum is attained by a symmetric decreasing density, established compact support of the resulting optimizer, and gave a rigorous derivation of the Lane--Emden characterization \cite[Appendix~A]{LiebOxford1981}.  Subsequent work improved the universal upper bound and developed the density-functional implications of the inequality \cite{ChanHandy1999,LewinLiebSeiringer2022,PerdewSun2022}.  Determining these sharp constants remains an important open problem.  Even in this case, the exact value of $\cLO(1,3)$ is unknown: a widely discussed candidate is approximately $1.4442$, obtained by conjecturally identifying it with the body-centered-cubic uniform-electron-gas value \cite{CotarPetrache2019,LewinLiebSeiringer2022}; beyond the one-particle Lane--Emden characterization, the values $\Lambda_N(1,3)$ likewise remain undetermined to our knowledge.  The sharp fixed-particle-number constants $\Lambda_N(1,3)$ have also been studied in connection with particle-number-dependent constraints and tests of the tightness of the universal inequality; see \cite{OdashimaCapelle2007,OdashimaCapelleTrickey2009,SeidlVuckovicGoriGiorgi2016,SeidlBenyahiaKooiGoriGiorgi2022,PerdewSun2022}.  For $N\geq2$, these works provide numerical bounds, structural reformulations, and guidance for approximate density functionals, but, to our knowledge, do not prove attainment of the exact fixed-$N$ variational problem.

By adding a particle whose position has a smooth probability density dilated to an increasingly large spatial scale (a \emph{diffuse particle at infinity}), Lieb and Oxford proved \cite[Appendix~B]{LiebOxford1981}, in the Coulomb case,
\begin{equation}
  \Lambda_N\leq\Lambda_{N+1}.
\end{equation}
The argument uses only homogeneity and therefore applies throughout the full Riesz range $0<\s<\d$; we recall it in \Cref{prop:monotonicity}.\footnote{\cite[Eq.~(16)]{SeidlVuckovicGoriGiorgi2016} prints a strict inequality and attributes it to Lieb and Oxford, but the cited primary argument is nonstrict; the same group's later account likewise states the nonstrict form \cite[Eq.~(1.12)]{SeidlBenyahiaKooiGoriGiorgi2022}.}

The existence question is different.  Seidl, Vuckovic, and Gori-Giorgi gave a formal argument suggesting that any Coulomb optimizer, if one exists, must be compactly supported \cite{SeidlVuckovicGoriGiorgi2016}.  Di Marino and Lelotte made the compact-support conclusion rigorous throughout the full Riesz range $0<\s<\d$: every fixed-particle optimizer is compactly supported if it exists \cite{DiMarinoLelotte2026}.  Their proof uses the duality and potential theory developed in \cite{DePascale2015,ButtazzoChampionDePascale2018,ColomboDiMarinoStra2019,Lelotte2024,Lelotte2025}.  Their result does not itself address attainment.  At $N=1$, however, Calvez, Carrillo, and Hoffmann had already proved attainment throughout the full Riesz range and obtained a radially non-increasing, compactly supported, bounded optimizer \cite[Theorem~2.8, Proposition~3.4, and Remark~3.5]{CalvezCarrilloHoffmann2017}.  Thus, prior to the present work, the fixed-particle attainment problem remained open for $N\geq2$.

For $N\geq2$, the remaining task is therefore to rule out loss of compactness along maximizing sequences.  Translation and dilation invariance allow such a sequence to split into widely separated pieces.  After recentering around one piece, the limiting $N$-particle plan may assign positive mass to several particle-number sectors, according to how many coordinates remain in bounded sets.  We call the resulting sector law, together with its one-body density, a profile.  Its mean particle number is the expected number of coordinates that remain finite after recentering, equivalently the mass of its one-body density.  The weak limit of the one-body densities records the combined local density but not its distribution among these sectors.  We must therefore retain this sector information in the limit and prove that a maximizing sequence cannot have only profiles with mean particle number strictly below $N$.

\subsection{Main result}

The following theorem resolves this compactness problem for every $N\geq2$, recovers the known $N=1$ result within a unified argument, and shows that the preceding monotonicity is strict.

\begin{theorem}[Main theorem]\label{thm:main}
Let $\d\geq1$ and $0<\s<\d$.  For every $N\geq1$, the supremum in \eqref{eq:LambdaN-intro} is attained.  Moreover,
\begin{equation}\label{eq:strict-sequence}
  0<\Lambda_1(\s,\d)<\Lambda_2(\s,\d)<\Lambda_3(\s,\d)<\cdots.
\end{equation}
Consequently, every fixed-$N$ optimizer is compactly supported.
\end{theorem}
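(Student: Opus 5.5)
The proof is an induction on $N$ that carries two statements at each stage: $(\mathrm{A}_N)$, that the supremum $\Lambda_N$ is attained, and $(\mathrm{S}_N)$, that $\Lambda_N>\Lambda_k$ for all $1\leq k<N$. The base case is $N=1$. There $\Gcal_1(\rho)=\frac12D(\rho)$, and attainment is the result of Calvez--Carrillo--Hoffmann recalled in the introduction. Positivity $\Lambda_1>0$ is immediate, since $D(\rho)>0$ for every nonzero $\rho$.

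\emph{Step (A), attainment given $(\mathrm{S}_N)$.} Take a maximizing sequence $\rho_n\in\Acal_N$. By dilation invariance, normalize $\Lcal(\rho_n)=1$. Let $P_n$ be optimal plans for $\Wcal_N(\rho_n)$.
\begin{itemize}
\item Apply concentration--compactness to $\rho_n$. If vanishing occurs, then $\Gcal_N(\rho_n)\to0$. The reason is that $D(\rho_n)\to0$ by Hardy--Littlewood--Sobolev combined with a local $L^p$ estimate, and $0\leq\Wcal_N\leq\frac12 D$ up to a vanishing error. This contradicts $\Lambda_N>0$, so vanishing is excluded.
\item Otherwise, recenter at a point of concentration. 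Pass to a limiting sector law: for each subset of coordinates that stays bounded, record the conditional law. This yields a profile consisting of weights $\alpha_k$ on sectors $k=0,\dots,N$ together with $k$-particle plans $Q_k$.
\item Prove a splitting inequality. The $N$-particle functional asymptotically decouples into the profile part plus a remainder carried by pieces that escape to infinity. Cross interactions between separated pieces vanish in both $D$ and the plan cost, and $\Lcal$ is superadditive in the limit. Applying the same extraction to the escaping pieces, and using the definition of each $\Lambda_k$ on every sector, gives
\[
\Lambda_N\leq\frac{\sum_k \alpha_k\,(\text{sector contribution})+\text{remainder}}{\Lcal(\rho)+\text{remainder }\Lcal}\leq \max_k \Lambda_k,
\]
where the maximum runs over the sectors actually present, with equality only when all mass lies in sectors achieving the maximum.
\item Conclude. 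By $(\mathrm{S}_N)$, any profile supported on sectors $k<N$ loses strictly. So a strict $N$-particle completion is needed: an honest grand-canonical profile (several sectors with positive weight) must be shown to be beatable by a genuine $N$-particle plan. I would do this by gluing the missing particles from other pieces as far-away diffuse particles. Following the monotonicity construction of \Cref{prop:monotonicity}, this costs nothing in the ratio. After this completion, the limit has a single sector $N$ with no loss. Weak lower semicontinuity of $\Wcal_N$ under tight convergence of plans, together with the $L^p$ norm and $D$, then gives an optimizer.
\end{itemize}

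\emph{Step (S), $(\mathrm{A}_N)\Rightarrow\Lambda_{N+1}>\Lambda_N$.} Let $\rho$ be an optimizer at level $N$. By Di Marino--Lelotte it is compactly supported, say in a ball $B_R$. Add one particle with a small density $\varepsilon\eta$ near the boundary of the support, or place mass just outside $B_R$ on the far side. Do not use the product coupling with the existing plan. Instead use a non-product coupling in which the new particle is anticorrelated with the existing particles. Because $\rho$ is compactly supported, there is room outside the support where the $\Lcal$ cost of the added mass is lower order: $\Lcal(\rho+\mu)=\Lcal(\rho)+\Lcal(\mu)$ when $\mu$ has disjoint support, and $\Lcal(\mu)$ is small for diffuse $\mu$. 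The gain $\frac12D$ cross term minus the transport cross term can then be made strictly positive at first order, which beats the diffuse-at-infinity equality case. Combined with $(\mathrm{S}_N)$, this gives $(\mathrm{S}_{N+1})$.

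The main obstacle is the strict completion in Step (A). One must show quantitatively that a limiting profile spread across several particle numbers, or one with mean particle number below $N$, cannot be a maximizer. This needs a splitting inequality that tracks sector weights exactly, with no loss in the transport term. Compact support of optimizers is a consequence of attainment together with Di Marino--Lelotte.
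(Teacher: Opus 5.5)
\textbf{Verdict.} Your induction scheme matches the paper's: carry attainment together with the strict gaps, and use Di Marino--Lelotte only in the $N\to N+1$ step. However, Step (A) has a genuine gap exactly where you locate the main obstacle.

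\textbf{The splitting inequality is not available as stated.} The gain of a limiting profile is not a sum $\sum_k\alpha_k(\text{sector contribution})$. For a grand-canonical profile $\Gamma$ with sector densities $\rho_k$, the gain is $\frac12D(\sum_k\rho_k)-\Wcal(\Gamma)$. This contains Hartree cross terms $D(\rho_k,\rho_l)$, $k\neq l$, with no compensating pair cost, since particles from different sectors never coexist. Moreover, the sector weights enter $D$ quadratically, $\Wcal$ linearly and $\Lcal$ to the power $p$. So applying the definition of $\Lambda_k$ sector by sector does not bound the profile quotient. The equality characterization you attach to that bound is precisely the strictness that still has to be proved.

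\textbf{Diffuse completion gives only a nonstrict bound.} Your remedy is to complete with diffuse particles at infinity as in \Cref{prop:monotonicity}. At a single scale $S$, the resulting change in the quotient is of order $S^{-\s}$ with no forced sign. In the limit you therefore recover only $\Gcal(\Gamma)/\Lcal(\rho_\Gamma)\le\Lambda_N$. The added particles escape as $S\to\infty$, so there is no limiting canonical $N$-particle state ``with no loss.'' A maximizing sequence whose profiles fluctuate and have quotient exactly $\Lambda_N$ is therefore not excluded. The contradiction argument needs $r_\alpha<\Lambda_N$ for every profile with $m_\alpha<N$, and then uniformly over the countably many profiles, using $r_\alpha\le C m_\alpha^{1-\s/\d}$ for small ones.

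\textbf{The missing idea is a first-order gain produced by particle-number fluctuations.}
\begin{itemize}
\item The paper completes sector $k$ with $N-k$ independent particles of law $\eta$, then symmetrizes.
\item It proves the exact identity $\Gcal(P_{\Gamma,\eta})-\Gcal(\Gamma)=D(h_\Gamma,\eta)+\frac{q-\Var(K)}2D(\eta)$, where $q=N-\E K$ and $h_\Gamma=\sum_k(k-\E K)\rho_k$ has total mass $\Var(K)>0$.
\item It uses the two-scale density $\eta_\varepsilon=\varepsilon\nu_R+(1-\varepsilon)\nu_{R\varepsilon^{-2/\s}}$.
\item The gain is then $\varepsilon D(h_\Gamma,\nu_R)+O(\varepsilon^2R^{-\s})$, and $R^\s D(h_\Gamma,\nu_R)$ tends to a positive multiple of $\Var(K)$.
\item The $\Lcal$ cost is at most $pq\varepsilon\int\rho_\Gamma^{\s/\d}\nu_R+O((\varepsilon^p+\varepsilon^2)R^{-\s})$, and $R^\s\int\rho_\Gamma^{\s/\d}\nu_R\to0$.
\item Fixing $R$ large and then $\varepsilon$ small gives a strictly larger quotient. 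This is where $p>1$ enters: an order-$\varepsilon$ gain beats an order-$\varepsilon^p$ self-cost. A single-scale diffuse completion cannot achieve this.
\end{itemize}

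\textbf{Step (S) needs a justification you have not supplied.} Your mechanism is the paper's, but you assert rather than prove that an anticorrelated coupling beats the product one. You need a reason the product coupling is not already optimal for $W(X,y)=\sum_i|x_i-y|^{-\s}$.
\begin{itemize}
\item The paper invokes the two-cycle condition. Optimality of the product coupling would force the exterior potentials of any two configurations in $\supp P$ to differ by a constant.
\item An exterior-uniqueness lemma for Riesz potentials of finite atomic measures then makes the counting measure $\mu_X$ deterministic. This contradicts $\rho\in L^p$.
\item Only after this does the explicit zero-marginal perturbation $(1-tf(X)g(y))\diff P(X)\,\nu(y)\diff y$ lower the cross cost at first order in $t$.
\end{itemize}
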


Since $\cLO(\s,\d)=\sup_{M\geq1}\Lambda_M(\s,\d)$, the strict inequalities in \eqref{eq:strict-sequence} imply that the universal Riesz Lieb--Oxford constant is not attained at any finite particle number.

At the one-particle level, the multimarginal interaction cost vanishes, so \eqref{eq:LambdaN-intro} reduces to the scale-invariant Hartree/local-power quotient over unit-mass densities.  If we write the spatial dimension in Calvez, Carrillo, and Hoffmann as our $\d$ and set $k=-\s$ and $m=1+\s/\d$, then their best constant $C_*$ in \cite[Eq.~(3.2)]{CalvezCarrilloHoffmann2017} satisfies $C_*=2\Lambda_1(\s,\d)$; the factor $2$ comes from the factor $1/2$ in our definition of the Hartree term.  The concentration--compactness argument developed below therefore gives an independent proof of the known one-particle attainment result and supplies the base case of the induction.

\subsection{Method of proof and relation to previous work}

The proof is organized around three implications:
\begin{align}
  \Lambda_k&\leq\Lambda_N \qquad \forall\,1\leq k\leq N,
  \label{eq:induction-monotonicity}\\
  \Lambda_N>\max_{1\leq k<N}\Lambda_k
  &\quad\Longrightarrow\quad \Lambda_N\text{ is attained},
  \label{eq:induction-attainment}\\
  \Lambda_N\text{ is attained}
  &\quad\Longrightarrow\quad \Lambda_{N+1}>\Lambda_N.
  \label{eq:induction-extension}
\end{align}
The maximum in \eqref{eq:induction-attainment} is absent when $N=1$, which gives the base case.  Once $\Lambda_N$ is attained, \eqref{eq:induction-extension} creates a strict gap at $N+1$.  By \eqref{eq:induction-monotonicity}, that gap lies above every lower-particle constant, and \eqref{eq:induction-attainment} gives attainment at $N+1$.  Thus, these implications form a closed induction.

The compactness implication \eqref{eq:induction-attainment} is of Hunziker--van Winter--Zhislin (HVZ) type (cf. \cite[Section~1]{Lewin2011}) in the following precise sense: if compactness fails, the profile decomposition yields a family of profiles, each with mean particle number strictly below $N$.  A deterministic profile then belongs to an exact $k$-particle sector with $k<N$ and is excluded by the strict inequality $\Lambda_N>\Lambda_k$; a profile with fluctuating particle number is excluded by the strict finite-$N$ completion argument below.

At the density level, we use concentration--compactness, following Lions and later refinements by Solimini, G\'erard, and Jaffard \cite{Lions1984I,Lions1984II,Solimini1995,Gerard1998,Jaffard1999}.  Given the densities of a maximizing sequence, we recenter around one concentration of mass, extract its limiting density, and repeat the procedure on what remains.  This produces at most countably many spatially separated density profiles that together account for all of the Riesz--Hartree energy.

At the plan level, after recentering around a density profile, only some particles may remain nearby; the others escape to infinity.  Following Bouchitt\'e, Buttazzo, Champion, and De Pascale \cite{BouchitteEtAl2021Relaxed,BouchitteEtAl2021Dissociation}, we record the escaped particles in the one-point compactification.  Decomposing by the number of particles that remain finite yields, in the terminology of Di Marino, Lewin, and Nenna \cite{DiMarinoLewinNenna2025}, a truncated grand-canonical probability $\Gamma=(\Gamma^{(0)},\ldots,\Gamma^{(N)})$.  Its particle-number random variable is defined by $K=k$ on the $k$-particle sector; thus, $\mathbb P(K=k)$ is the total mass of $\Gamma^{(k)}$.  The associated density profile $\rho_\Gamma$ is the sector-weighted sum of the one-body densities and has total mass $\E[K]$; see \Cref{def:grand-canonical} and \eqref{eq:grand-canonical-density}.  Applying this at every spatial profile matches the plan and density decompositions and bounds the sum of the profile pair costs by the limiting pair cost; see \Cref{prop:profile-decomposition}.

To exclude a nonzero profile with $\E[K]<N$, we show that its quotient $\Gcal(\Gamma)/\Lcal(\rho_\Gamma)$ is strictly below $\Lambda_N$.  If $K=k$ almost surely, this follows from $\Lambda_k<\Lambda_N$.  If $K$ fluctuates, we complete each $k$-particle sector to exactly $N$ particles by adding $N-k$ particles and symmetrizing.  Related canonicalization constructions for grand-canonical Coulomb and Riesz states appear in Lewin, Lieb, and Seiringer \cite[Lemma~3.2]{LewinLiebSeiringer2018}; our completion is performed directly at the prescribed finite particle number.  The identity in \Cref{lem:completion-identity} turns $\Var(K)>0$ into a signed interaction defect of positive total mass.  Placing a small part of the added density on a remote annulus detects this defect and gives a first-order gain, while placing the rest on a much larger scale makes the remaining costs lower order because $p>1$.  The completed canonical state therefore has a strictly larger quotient, which is the strict comparison in \Cref{prop:strict-completion}.

For the implication \eqref{eq:induction-extension}, suppose $\rho$ attains $\Lambda_N$ and let $P$ be an associated optimal plan.  By the compact-support theorem of Di Marino and Lelotte \cite{DiMarinoLelotte2026}, $\rho$ is compactly supported, hence $P$ is supported on configurations in a fixed compact set; see \Cref{prop:DML}.  Adding an independent diffuse particle at infinity, as in \Cref{prop:monotonicity}, yields only a nonstrict comparison.  For strictness, we couple a small exterior portion of its density non-independently with the original configuration, preserving both marginals.  If the product coupling were optimal, the two-cycle condition \cite{SmithKnott1992,GangboMcCann1996,Ruschendorf1996} would force the exterior Riesz potentials of any two configurations in the support of $P$ to differ only by a constant.  The exterior uniqueness provided by \Cref{lem:exterior-injectivity} would then force their empirical counting measures to be identical, which is impossible because their average $\rho$ is absolutely continuous.  The perturbation in \Cref{lem:strict-coupling} therefore lowers the cross-interaction between the original $N$-particle configuration and the added particle.  Sending the remaining added-particle mass far away makes the local-power cost lower order and yields $\Lambda_{N+1}>\Lambda_N$; see \Cref{prop:strict-extension}.

The proof draws on several established ingredients: concentration--compactness, compactification into lower-particle sectors, grand-canonical formalism, the HVZ-type perspective, the compact-support theorem of Di Marino and Lelotte, and the general cyclical-monotonicity criterion.  To our knowledge, the main new elements are two finite-particle comparison arguments.  The first combines an exact completion identity at a prescribed particle number with a two-scale added density to obtain a strict comparison for profiles with fluctuating particle number.  The second uses compact support together with a non-product coupling to prove strict monotonicity of the fixed-particle Lieb--Oxford constants.

% \begin{remark}[Fock-space interpretation]\label{rem:fock-intro}
% A truncated grand-canonical probability may be viewed as the classical probabilistic analogue of a block-diagonal state on truncated Fock space.  This is only an analogy: the object used below is a probability measure on
% \[
%   \bigsqcup_{k=0}^N(\R^\d)^k/\mathfrak S_k,
% \]
% with no off-diagonal coherences, creation or annihilation operators, or Bose--Fermi structure beyond ordinary permutation symmetry within each sector.
% \end{remark}

\subsection{Organization of the paper}

\Cref{sec:setup} records the fixed-particle variational problem, the basic Riesz estimates, and the Lieb--Oxford monotonicity argument.  \Cref{sec:profiles} constructs the grand-canonical limits of maximizing sequences; the density-profile extraction and Riesz--Hartree estimates are placed in \Cref{app:density-profiles}.  \Cref{sec:grand-canonical} proves the strict completion of a profile with fluctuating particle number, and \Cref{sec:attainment} derives compactness under the strict lower-particle inequality.  \Cref{sec:one-particle-extension} proves the strict extension from $N$ to $N+1$.  The induction is closed in \Cref{sec:conclusion}, which also discusses consequences and outlook.

\subsection{Statement on AI use}

The author used generative AI tools (OpenAI's ChatGPT and Codex) during the development and preparation of this paper to identify potentially relevant literature, explore and test mathematical arguments, improve the exposition, and check internal consistency and cross-references.  The author treated all AI-generated output as provisional material requiring independent verification and did not rely on it as an authority, checking literature suggestions against the relevant sources and independently working through mathematical suggestions before deciding whether to incorporate them into the paper.  The author made all final decisions concerning the manuscript and takes full responsibility for its contents.

\section{The fixed-particle-number problem}\label{sec:setup}

\subsection{Optimal plans, scaling, and finiteness}

For nonnegative functions $f,g$, write
\begin{equation}\label{eq:Dfg}
  D(f,g):=\iint_{\R^\d\times\R^\d}\frac{f(x)g(y)}{|x-y|^\s}\diff x\diff y,
  \qquad D(f):=D(f,f).
\end{equation}
If $f\in L^1(\R^\d)$ is real-valued and $g\geq0$ is such that
\begin{equation}
  \int_{\R^\d}|f(x)|U_g(x)\diff x<\infty,
  \qquad
  U_g(x):=\int_{\R^\d}\frac{g(y)}{|x-y|^\s}\diff y,
\end{equation}
we also write
\begin{equation}
  D(f,g):=\int_{\R^\d} f(x)U_g(x)\diff x.
\end{equation}
This signed pairing is absolutely convergent.

\begin{lemma}[Optimal plans]\label{lem:OT-existence}
For every $\rho\in\Acal_N$, the minimum in \eqref{eq:WN-intro} is attained.
\end{lemma}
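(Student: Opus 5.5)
The plan is the direct method in the space of symmetric plans with fixed marginals. First I check that the admissible set is nonempty and that the cost is finite for some admissible plan. The product plan $P_0=(\rho/N)^{\otimes N}$ is symmetric and has one-body density $\rho$. Its cost is
\[
\binom N2 N^{-2}D(\rho).
\]
This is finite because $\rho\in L^1\cap L^p$ with $p=1+\s/\d$: by the Hardy--Littlewood--Sobolev inequality, or by splitting the kernel into $|x-y|<1$ and $|x-y|\ge1$ and using Young's inequality, $D(\rho)\lesssim\|\rho\|_{L^r}^2$ with $r=2\d/(2\d-\s)$. Since $1<r<p$, interpolation between $L^1$ and $L^p$ controls this norm. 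Hence the infimum in \eqref{eq:WN-intro} is a finite number $W\ge0$.

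Next, take a minimizing sequence $P_n$ of symmetric plans with $\rho_{P_n}=\rho$. All the $P_n$ have the same marginals $\rho/N\in\Pcal(\R^\d)$. Each marginal is a single fixed probability measure, hence tight, so the family $\{P_n\}$ is tight on $(\R^\d)^N$: for compact $K\subset\R^\d$,
\[
P_n\bigl(((\R^\d)^N)\setminus K^N\bigr)\le N\cdot\rho(\R^\d\setminus K)/N .
\]
Prokhorov's theorem then gives a subsequence $P_n\weak P$ narrowly. The limit $P$ is again a probability measure. It is symmetric, because the pushforward under each permutation is narrowly continuous. Its marginals equal $\rho/N$, because the projections are continuous and test functions in $C_b(\R^\d)$ pull back to $C_b((\R^\d)^N)$.

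Finally, the cost $c(x)=\sum_{i<j}|x_i-x_j|^{-\s}$, extended by $+\infty$ on the diagonals, is lower semicontinuous and nonnegative. It is therefore the increasing limit of the bounded continuous truncations $c\wedge M$. Passing to the limit along the subsequence and then letting $M\to\infty$ by monotone convergence gives
\[
\int c\diff P\le\liminf_n\int c\diff P_n=W .
\]
Combined with admissibility, $P$ is a minimizer.

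The only delicate points are the finiteness of $D(\rho)$ and the lower semicontinuity of the singular cost. Finiteness is what guarantees that the minimum is a finite number rather than $+\infty$. Lower semicontinuity is handled by the truncation argument above, so I expect no real obstacle; this is the standard existence theory for multimarginal transport cited in the introduction.
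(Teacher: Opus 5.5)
Your proposal is correct and follows the paper's route: the product plan $(\rho/N)^{\otimes N}$ has finite cost by Hardy--Littlewood--Sobolev, the plans with fixed marginals form a tight closed set, and the nonnegative lower semicontinuous Riesz cost lets the direct method produce a minimizer. The only cosmetic difference is that you minimize directly over symmetric plans, whereas the paper cites the general existence theorem and then symmetrizes.
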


\begin{proof}
The product probability $(\rho/N)^{\otimes N}$ is admissible and has finite cost because the Hardy--Littlewood--Sobolev inequality implies $D(\rho)<\infty$.  The set of probabilities with the prescribed marginals is tight and weakly closed, hence weakly compact.  The Riesz pair cost is nonnegative and lower semicontinuous.  The direct method gives a minimizer (see, e.g., \cite[Theorem~4.1 and Lemmas~4.3--4.4]{Villani2009}), and symmetrization preserves both the marginals and the cost.
\end{proof}

Set
\begin{equation}\label{eq:HLS-exponent}
  \sigma:=\frac{2\d}{2\d-\s}.
\end{equation}
Then $1<\sigma<p$, and the Hardy--Littlewood--Sobolev inequality (see \cite{Lieb1983} for the sharp form) together with interpolation gives
\begin{equation}\label{eq:HLS-interpolation}
  D(f,g)\leq C_{\s,\d}\norm{f}_{\sigma}\norm{g}_{\sigma},
  \qquad
  \norm{f}_{\sigma}
  \leq\norm{f}_1^{\frac{\d-\s}{2\d}}
  \norm{f}_p^{\frac{\d+\s}{2\d}}.
\end{equation}
In particular, recalling that $\Lcal(f)=\int_{\R^\d}f^p$ as in \eqref{eq:LambdaN-intro},
\begin{equation}\label{eq:D-mass-L}
  D(f)\leq C_{\s,\d}\norm{f}_1^{1-\s/\d}\Lcal(f),
\end{equation}
for every nonnegative $f\in L^1\cap L^p$.  We use these basic estimates repeatedly in the sequel.

For $\lambda>0$, set
\begin{equation}\label{eq:scaling}
  \rho_\lambda(x)=\lambda^\d\rho(\lambda x).
\end{equation}
Then
\begin{equation}\label{eq:scaling-all}
  \begin{aligned}
    \int_{\R^\d}\rho_\lambda&=\int_{\R^\d}\rho,
    & D(\rho_\lambda)&=\lambda^sD(\rho),\\
    \Wcal_N(\rho_\lambda)&=\lambda^\s\Wcal_N(\rho),
    & \Lcal(\rho_\lambda)&=\lambda^\s\Lcal(\rho).
  \end{aligned}
\end{equation}
Thus, the quotient in \eqref{eq:LambdaN-intro} is invariant under translations and mass-preserving dilations.

Since $\Wcal_N\geq0$, \eqref{eq:D-mass-L} gives
\begin{equation}\label{eq:Lambda-finite}
  \Lambda_N\leq C_{\s,\d}N^{1-\s/\d}<\infty.
\end{equation}
Also, $\Lambda_1>0$, because $\Wcal_1=0$ and $D(\rho)>0$ for every nonzero admissible density.

\subsection{Nonstrict monotonicity}

The following proposition is proved by the diffuse-particle argument of Lieb and Oxford \cite{LiebOxford1981}.  By homogeneity, the same argument applies throughout the stated Riesz range.

\begin{proposition}[Monotonicity]\label{prop:monotonicity}
If $1\leq k\leq N$, then
\begin{equation}\label{eq:monotonicity}
  \Lambda_k\leq\Lambda_N.
\end{equation}
\end{proposition}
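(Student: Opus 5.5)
It suffices to treat $N=k+1$ and then iterate, so fix $\rho\in\Acal_k$ and $\varepsilon>0$; we produce an admissible density of mass $k+1$ whose quotient is at least $\Gcal_k(\rho)/\Lcal(\rho)-\varepsilon$. By density we may assume $\rho$ is bounded with compact support, or at least that truncation changes $D$, $\Wcal_k$ and $\Lcal$ only slightly. For the truncation we use the continuity of $\Wcal_k$ under such approximations, or simply take any near-optimal plan $P$ for $\rho$; only upper bounds on $\Wcal_{k+1}$ are needed.

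\emph{Construction.} Fix a smooth probability density $\eta\geq0$ on $\R^\d$ and set $\eta_R(x)=R^{-\d}\eta(x/R)$ for $R\gg1$. Define $\tilde\rho=\rho+\eta_R\in\Acal_{k+1}$ and the trial plan $\tilde P=\Sym\bigl(P\otimes\eta_R\bigr)$, where $P$ is an optimal plan for $\rho$ from \Cref{lem:OT-existence}. Symmetrization preserves the one-body density $\rho+\eta_R$ and the pair cost. Since the added particle is independent, the pair cost of $\tilde P$ equals $\Wcal_k(\rho)+D(\rho,\eta_R)$, and therefore
\begin{equation*}
  \Wcal_{k+1}(\tilde\rho)\leq\Wcal_k(\rho)+D(\rho,\eta_R).
\end{equation*}
Meanwhile $\tfrac12D(\tilde\rho)=\tfrac12D(\rho)+D(\rho,\eta_R)+\tfrac12D(\eta_R)$. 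The cross terms cancel exactly, giving
\begin{equation*}
  \Gcal_{k+1}(\tilde\rho)\geq\Gcal_k(\rho)+\tfrac12D(\eta_R)\geq\Gcal_k(\rho).
\end{equation*}

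\emph{Local power term.} We show $\Lcal(\rho+\eta_R)\to\Lcal(\rho)$ as $R\to\infty$. Since $\|\eta_R\|_\infty=R^{-\d}\|\eta\|_\infty\to0$ and $\Lcal(\eta_R)=R^{-\s}\Lcal(\eta)\to0$, the elementary inequality $(a+b)^p\leq a^p+p\,2^{p-1}(a^{p-1}b+b^p)$ gives
\begin{equation*}
  \Lcal(\rho+\eta_R)\leq\Lcal(\rho)+C\int\rho^{p-1}\eta_R+C\Lcal(\eta_R).
\end{equation*}
The middle term is at most $\|\eta_R\|_\infty\int\rho^{p-1}$ when $\rho^{p-1}\in L^1$, which holds for a compactly supported bounded approximant. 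In general, Hölder gives the bound $\|\rho\|_p^{p-1}\|\eta_R\|_p\to0$. In either case $\Lcal(\tilde\rho)\to\Lcal(\rho)$. Consequently
\begin{equation*}
  \Lambda_{k+1}\geq\limsup_{R\to\infty}\frac{\Gcal_{k+1}(\tilde\rho)}{\Lcal(\tilde\rho)}\geq\frac{\Gcal_k(\rho)}{\Lcal(\rho)},
\end{equation*}
and taking the supremum over $\rho$ yields $\Lambda_k\leq\Lambda_{k+1}$. Induction then gives \eqref{eq:monotonicity}.

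\emph{Main point.} The only step that needs care is the exact cancellation of the cross-interaction $D(\rho,\eta_R)$ between the Hartree term and the product coupling. This cancellation is why no decay rate for the cross-term is needed. The Hölder estimate handles the mixed term in $\Lcal$ without any truncation, so no approximation of $\rho$ is actually required.
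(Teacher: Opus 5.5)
Your proposal is correct and is essentially the paper's diffuse-particle argument: an independent spread-out particle via $\Sym(P\otimes\eta_R)$, exact cancellation of the cross term $D(\rho,\eta_R)$, and $\Lcal(\rho+\eta_R)\to\Lcal(\rho)$ (your Hölder bound handles the mixed term). The only differences are cosmetic. You add one particle at a time and iterate, whereas the paper adds $N-k$ particles at once. Your opening truncation discussion is unnecessary, as you note yourself.
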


\begin{proof}
Assume $k<N$; otherwise, the assertion is tautological.  Let $\rho\in\Acal_k$ and let $P$ be an optimal $k$-particle plan.  Choose a smooth probability density $\eta$ and define $\eta_R(x)=R^{-\d}\eta(x/R)$.  Form the product $P\otimes\eta_R^{\otimes (N-k)}$ and symmetrize it over the $N$ labels.  The resulting canonical $N$-particle plan has density $\rho+(N-k)\eta_R$ and pair cost
\begin{equation}
  \Wcal_k(\rho)+(N-k)D(\rho,\eta_R)+\binom{N-k}{2}D(\eta_R).
\end{equation}
Consequently,
\begin{align}\label{eq:monotonicity-gain}
  \Gcal_N(\rho+(N-k)\eta_R)
  &\geq
  \frac12D(\rho+(N-k)\eta_R)
  -\left[
  \Wcal_k(\rho)+(N-k)D(\rho,\eta_R)
  +\binom{N-k}{2}D(\eta_R)
  \right]\notag\\
  &=\Gcal_k(\rho)+\frac{N-k}2D(\eta_R).
\end{align}
Since $D(\eta_R)=R^{-\s}D(\eta)\to0$ and
\begin{equation}
  \norm{(N-k)\eta_R}_p^p=(N-k)^pR^{-\s}\Lcal(\eta)\longrightarrow0,
\end{equation}
we have $\Lcal(\rho+(N-k)\eta_R)\to\Lcal(\rho)$.  Taking quotients and then the supremum over $\rho\in\Acal_k$ gives $\Lambda_k\leq\Lambda_N$.
\end{proof}

\section{Grand-canonical limits of maximizing sequences}\label{sec:profiles}

A normalized maximizing sequence need not be tight modulo a single translation.  At the density level, it may split into countably many profiles.  At the plan level, translating to one profile can leave only some of the $N$ coordinates in bounded sets.  The correct local object is therefore a truncated grand-canonical probability.  \Cref{app:density-profiles} supplies the density profile decomposition and the exact Hartree identity.  The present section identifies the simultaneous plan limits and proves the lower-semicontinuity estimate for their pair costs.

Following Di Marino, Lewin, and Nenna \cite{DiMarinoLewinNenna2025}, we call a probability concentrated in one particle-number sector \emph{canonical} and a probability on the disjoint union of sectors, constrained through its first moment, \emph{grand-canonical}.  All grand-canonical probabilities below are truncated to the sectors $0,\ldots,N$.

\subsection{Truncated grand-canonical probabilities}

\begin{definition}[Truncated grand-canonical probability]\label{def:grand-canonical}
A truncated grand-canonical probability with sectors at most $N$ is a tuple
\begin{equation}
  \Gamma=(\Gamma^{(0)},\Gamma^{(1)},\ldots,\Gamma^{(N)}),
\end{equation}
where $\Gamma^{(0)}\geq0$ is a scalar, each $\Gamma^{(k)}$ for $k\geq1$ is a finite symmetric nonnegative measure on $(\R^\d)^k$, $\norm{\Gamma^{(k)}}$ denotes the total mass of $\Gamma^{(k)}$, and
\begin{equation}\label{eq:grand-canonical-normalization}
  \Gamma^{(0)}+\sum_{k=1}^N\norm{\Gamma^{(k)}}=1.
\end{equation}
\end{definition}

Let $K$ be the random variable with law
\begin{equation}
  \mathbb P(K=0):=\Gamma^{(0)},
  \qquad
  \mathbb P(K=k):=\norm{\Gamma^{(k)}}\quad(1\leq k\leq N).
\end{equation}
For $1\leq k\leq N$, the one-body measure of the $k$-particle sector is
\begin{equation}\label{eq:sector-density}
  \rho_k:=\sum_{i=1}^k(\pi_i)_\#\Gamma^{(k)}
  =k(\pi_1)_\#\Gamma^{(k)},
  \qquad \rho_0:=0,
\end{equation}
where $\pi_i$ denotes the $i$-th coordinate projection.  Let
\begin{equation}\label{eq:grand-canonical-density}
  \rho_\Gamma:=\sum_{k=1}^N\rho_k,
  \qquad
  \int_{\R^\d}\rho_\Gamma=\E K.
\end{equation}
The grand-canonical average pair cost is $\Wcal(\Gamma)$.  We write $\Gcal(\Gamma)$ for the associated signed gain relative to the Hartree term:
\begin{align}
  \Wcal(\Gamma)
  &:=\sum_{k=2}^N
  \int_{(\R^\d)^k}\sum_{1\leq i<j\leq k}\frac1{|x_i-x_j|^\s}\diff\Gamma^{(k)},
  \label{eq:grand-canonical-cost}\\
  \Gcal(\Gamma)
  &:=\frac12D(\rho_\Gamma)-\Wcal(\Gamma).
  \label{eq:grand-canonical-gain}
\end{align}
Equivalently, $-\Gcal(\Gamma)$ is the indirect Riesz energy of $\Gamma$.
We only consider grand-canonical probabilities for which $\rho_\Gamma\in L^1\cap L^p$ and the quantities above are finite.  Since the sector measures are positive and sum to an absolutely continuous measure, our assumption implies that each $\rho_k$ is itself absolutely continuous.

\subsection{Plan limits and particle sectors}

Let $P_n\in\Pcal_{\rm sym}((\R^\d)^N)$ have one-body densities $\rho_n$ with
\begin{equation}
  \int_{\R^\d}\rho_n=N,
  \qquad
  \sup_n\Lcal(\rho_n)<\infty.
\end{equation}
Assume in addition that
\begin{equation}\label{eq:uniform-plan-pair-cost}
  \sup_n
  \int_{(\R^\d)^N}
  \sum_{1\leq i<j\leq N}
  \frac{1}{|x_i-x_j|^\s}\diff P_n
  <\infty.
\end{equation}
By \Cref{prop:density-profiles}, after passing to a subsequence, there are profiles $\rho^{(\alpha)}$ and translation centers $a_n^{(\alpha)}$ satisfying the center-separation condition \eqref{eq:center-separation} and the mass and local-energy bounds \eqref{eq:profile-budgets-density}.  Passing to the further subsequence furnished by \Cref{prop:Hartree-profile}, we also have
\begin{equation}
  \lim_{n\to\infty}D(\rho_n)=\sum_\alpha D(\rho^{(\alpha)}).
\end{equation}
We now lift these density profiles to the plans.

Following the relaxed-cost construction of Bouchitt\'e, Buttazzo, Champion, and De Pascale \cite{BouchitteEtAl2021Relaxed}, let
\begin{equation}
  X:=\R^\d\cup\{\infty\}
\end{equation}
be the one-point compactification.  Extend the Riesz pair cost to $X\times X$ by
\begin{equation}\label{eq:extended-cost}
  \overline c(x,y)
  :=\begin{cases}
  |x-y|^{-\s},&x,y\in\R^\d,\ x\neq y,\\
  +\infty,&x=y\in\R^\d,\\
  0,&x=\infty\text{ or }y=\infty.
  \end{cases}
\end{equation}
This function is nonnegative and lower semicontinuous.

For $b\in\R^\d$, let $\tau_b(x):=x+b$.  For each profile center, define
\begin{equation}\label{eq:translated-plans}
  Q_n^{(\alpha)}
  :=\left((\tau_{-a_n^{(\alpha)}})^{\otimes N}\right)_\#P_n,
\end{equation}
viewed as probability measures on the compact space $X^N$.  After a diagonalization argument, we may assume that
\begin{equation}\label{eq:plan-weak-limit}
  \forall\,\alpha,\qquad
  Q_n^{(\alpha)}\weakstar\widetilde P^{(\alpha)}
  \quad\text{in }\Pcal(X^N).
\end{equation}

For $1\leq k\leq N$, let $\pi_{1,\ldots,k}:X^N\to X^k$ denote the projection onto the first $k$ coordinates.  For $0\leq k\leq N$, define the $k$-particle sector of $\widetilde P^{(\alpha)}$ by
\begin{equation}\label{eq:sector-from-compactification}
  \Gamma^{(\alpha,k)}
  :=\binom Nk(\pi_{1,\ldots,k})_\#
  \left[
  \widetilde P^{(\alpha)}\big|_{(\R^\d)^k\times\{\infty\}^{N-k}}
  \right],
\end{equation}
with the evident scalar interpretation for $k=0$.  The binomial coefficient sums over the choices of the $k$ labels that remain finite.  This is the standard sector normalization for localized canonical plans and truncated grand-canonical probabilities \cite{BouchitteEtAl2021Relaxed,DiMarinoLewinNenna2025}.  Symmetry implies that these sector masses sum to one.  Let
\begin{equation}
  \Gamma^{(\alpha)}
  :=(\Gamma^{(\alpha,0)},\ldots,\Gamma^{(\alpha,N)}).
\end{equation}
For every profile index $\alpha$ and every $\varphi\in C_c(\R^\d)$, the profile convergence and support separation in \Cref{prop:density-profiles}, together with $\norm{e_n^{(J)}}_1\to0$ for fixed $J\geq\alpha$, give
\begin{equation}\label{eq:full-density-local-limit}
  \int_{\R^\d}\varphi(x)\rho_n(x+a_n^{(\alpha)})\diff x
  \longrightarrow
  \int_{\R^\d}\varphi(x)\rho^{(\alpha)}(x)\diff x.
\end{equation}
For the same $\alpha$ and $\varphi$,
\begin{equation}
  \int_{\R^\d}\varphi(x)\rho_n(x+a_n^{(\alpha)})\diff x = \int_{X^N}\sum_{i=1}^N\varphi(x_i)\diff Q_n^{(\alpha)} \longrightarrow \int_{X^N}\sum_{i=1}^N\varphi(x_i)\diff\widetilde P^{(\alpha)}.
\end{equation}
Comparison with \eqref{eq:full-density-local-limit} gives
\begin{equation}\label{eq:grand-canonical-profile-density}
  \rho_{\Gamma^{(\alpha)}}=\rho^{(\alpha)}.
\end{equation}

For a single translated limit, the compactified limiting plan splits into the sectors defined above: its $k$-particle sector is the part supported on configurations with exactly $k$ coordinates in Euclidean space and $N-k$ coordinates at infinity.  The following estimate is the corresponding lower-semicontinuity mechanism of \cite{BouchitteEtAl2021Relaxed}.  The additional point is that disjoint localization allows all selected translated profiles to be summed before passing to the countable limit.

\begin{proposition}[Profile-cost lower semicontinuity]\label{prop:cost-profile}
With the notation above,
\begin{equation}\label{eq:cost-profile}
  \sum_{\alpha=1}^\infty\Wcal(\Gamma^{(\alpha)})
  \leq\liminf_{n\to\infty}
  \int_{(\R^\d)^N}\sum_{1\leq i<j\leq N}
  \frac1{|x_i-x_j|^\s}\diff P_n.
\end{equation}
\end{proposition}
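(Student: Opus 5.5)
It suffices to prove, for every fixed finite $J\geq1$,
\[
  \sum_{\alpha=1}^J\Wcal(\Gamma^{(\alpha)})\leq\liminf_{n\to\infty}\int\sum_{i<j}|x_i-x_j|^{-\s}\diff P_n ,
\]
and then let $J\to\infty$. Fix $J$ and a radius $R>0$. By the center-separation condition \eqref{eq:center-separation}, $|a_n^{(\alpha)}-a_n^{(\beta)}|\to\infty$ for $\alpha\neq\beta$. Hence for $n$ large the balls $B_n^{(\alpha)}:=B(a_n^{(\alpha)},R)$, $1\leq\alpha\leq J$, are pairwise disjoint.

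\emph{Localized costs.} For a configuration $x\in(\R^\d)^N$, I will use
\[
  \sum_{i<j}|x_i-x_j|^{-\s}\geq\sum_{\alpha=1}^J\sum_{i<j}\one_{B_n^{(\alpha)}}(x_i)\one_{B_n^{(\alpha)}}(x_j)\,|x_i-x_j|^{-\s}.
\]
This holds because the cost is nonnegative and, by disjointness, each pair is counted at most once on the right. Integrating against $P_n$ and translating by $a_n^{(\alpha)}$ gives
\[
  \int c\diff P_n\geq\sum_{\alpha=1}^J\int_{X^N}\sum_{i<j}\one_{B_R}(x_i)\one_{B_R}(x_j)\,\overline c(x_i,x_j)\diff Q_n^{(\alpha)} .
\]
To pass to the limit via the weak-$\ast$ convergence \eqref{eq:plan-weak-limit}, I need lower semicontinuous integrands. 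So I replace $\one_{B_R}$ by a continuous cutoff $\chi_R\in C_c(\R^\d)$, $0\leq\chi_R\leq\one_{B_R}$, extended by $0$ at $\infty$. The function $(x,y)\mapsto\chi_R(x)\chi_R(y)\overline c(x,y)$ is nonnegative and lower semicontinuous on $X\times X$. It is the product of a nonnegative continuous function with the lower semicontinuous $\overline c$; on the diagonal in $\R^\d$ the value is $+\infty$ or $0$, and the latter occurs only where the product vanishes, so lower semicontinuity holds. The portmanteau theorem then gives
\[
  \liminf_n\int c\diff P_n\geq\sum_{\alpha=1}^J\int_{X^N}\sum_{i<j}\chi_R(x_i)\chi_R(x_j)\,\overline c(x_i,x_j)\diff\widetilde P^{(\alpha)} .
\]

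\emph{Sector identification.} Fix $\alpha$. I decompose $\widetilde P^{(\alpha)}$ according to which coordinates are finite. The diagonal singularity forces mass zero on finite coincident pairs, which is not needed for the lower bound anyway. On the piece where exactly the labels in a set $S$, $|S|=k$, are finite, only pairs in $S$ contribute, since $\chi_R(\infty)=0$. By symmetry, summing over the $\binom Nk$ choices of $S$ reproduces the normalization \eqref{eq:sector-from-compactification}. The $\alpha$-term therefore equals
\[
  \sum_{k=2}^N\int_{(\R^\d)^k}\sum_{i<j\leq k}\chi_R(x_i)\chi_R(x_j)|x_i-x_j|^{-\s}\diff\Gamma^{(\alpha,k)} .
\]
Now let $R\to\infty$ with $\chi_R\uparrow1$ pointwise. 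Monotone convergence gives $\Wcal(\Gamma^{(\alpha)})$ for each $\alpha\leq J$. Finally, letting $J\to\infty$ gives \eqref{eq:cost-profile}, the summands being nonnegative.

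\emph{Main obstacle.} The delicate step is the lower semicontinuity passage on the compactified space. One must check that the cutoff cost is genuinely lower semicontinuous near $\infty$, which holds because the cutoff kills it there. One must also make sure that no mass from other profiles, or from particles escaping relative to center $\alpha$, spuriously enters the $\alpha$-ball. The latter is harmless: any such contribution appears on both sides consistently, since we only bound from below. The diagonal argument behind \eqref{eq:plan-weak-limit} supplies the convergences for all $\alpha\leq J$ along a common subsequence, which is what the argument requires.
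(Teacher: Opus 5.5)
Your proposal is correct and follows essentially the same route as the paper: disjointness of the translated balls for large $n$ gives a pointwise bound of the summed localized costs by the full pair cost, lower semicontinuity of the cutoff cost on $X^N$ plus the portmanteau theorem passes to the limit plans, and monotone convergence in $R$ (after sector identification) followed by letting the number of profiles grow yields $\sum_\alpha\Wcal(\Gamma^{(\alpha)})$. The only cosmetic difference is that you take cutoffs $\chi_R\leq\one_{B_R}$ and pass through indicators first. The paper instead uses cutoffs equal to one on $B_R$ and supported in $B_{2R}$, and therefore requires the balls $B_{2R}(a_n^{(\alpha)})$ to be disjoint.
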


\begin{proof}
Choose radial continuous cutoffs $0\leq\chi_R\leq1$, pointwise nondecreasing in $R$, with $\chi_R=1$ on $B_R$, $\chi_R=0$ outside $B_{2R}$, and $\chi_R(\infty)=0$.  Define
\begin{equation}
  w_R(x,y):=
  \begin{cases}
    \chi_R(x)\chi_R(y)\,\overline c(x,y),
    &\chi_R(x)\chi_R(y)>0,\\
    0,&\chi_R(x)\chi_R(y)=0,
  \end{cases},
\end{equation}
and set on $X^N$
\begin{equation}\label{eq:FR}
  F_R(x_1,\ldots,x_N)
  :=\sum_{1\leq i<j\leq N} w_R(x_i,x_j).
\end{equation}
The function $w_R$, and hence $F_R$, is nonnegative and lower semicontinuous.  At a point where the cutoff product is zero, lower semicontinuity follows from nonnegativity.  At a point where the cutoff product is positive, it follows from continuity of that positive factor and lower semicontinuity of $\overline c$.  Fix $M\in\N$ and $R>0$.  By \eqref{eq:center-separation}, for all sufficiently large $n$, the balls
\begin{equation}
  B_{2R}(a_n^{(1)}),\ldots,B_{2R}(a_n^{(M)})
\end{equation}
are pairwise disjoint.  Fix a pair of indices $i<j$.  For a given profile $\alpha$, the term $w_R(x_i-a_n^{(\alpha)},x_j-a_n^{(\alpha)})$ can be nonzero only if both $x_i$ and $x_j$ lie in $B_{2R}(a_n^{(\alpha)})$.  Since these balls are pairwise disjoint, at most one profile index $\alpha$ can contribute for this fixed pair.  For that index, the cutoff factors are bounded by one, and translating both coordinates does not change their separation.  Hence, the localized contribution is bounded above by the original Riesz interaction between $x_i$ and $x_j$.  Summing first over the profile indices and then over all particle pairs gives the pointwise estimate \eqref{eq:localized-cost-sum}:
\begin{equation}\label{eq:localized-cost-sum}
  \sum_{\alpha=1}^M
  F_R(x_1-a_n^{(\alpha)},\ldots,x_N-a_n^{(\alpha)})
  \leq\sum_{1\leq i<j\leq N}\frac1{|x_i-x_j|^\s}.
\end{equation}
Portmanteau's theorem and \eqref{eq:plan-weak-limit} yield
\begin{equation}
  \sum_{\alpha=1}^M\int_{X^N} F_R\diff\widetilde P^{(\alpha)}
  \leq\liminf_n\int_{(\R^\d)^N}\sum_{i<j}\frac1{|x_i-x_j|^\s}\diff P_n.
\end{equation}
Monotone convergence and the sector decomposition give
\begin{equation}
  \int_{X^N} F_R\diff\widetilde P^{(\alpha)}
  \uparrow\Wcal(\Gamma^{(\alpha)})
  \qquad\text{as }R\to\infty.
\end{equation}
Finally, letting $M\to\infty$ completes the proof.
\end{proof}

The density profiles from \Cref{app:density-profiles}, their exact Hartree decoupling, and the preceding plan-level lower-semicontinuity estimate give the compactness statement needed for a maximizing sequence.

\begin{proposition}[Profile decomposition]\label{prop:profile-decomposition}
Let $P_n\in\Pcal_{\rm sym}((\R^\d)^N)$ have densities $\rho_n$ satisfying
\begin{equation}
  \int_{\R^\d}\rho_n=N,
  \qquad
  \sup_n\Lcal(\rho_n)<\infty.
\end{equation}
Assume also that the uniform pair-cost bound \eqref{eq:uniform-plan-pair-cost} holds.
After passage to a subsequence, there exist at most countably many truncated grand-canonical profiles $\Gamma^{(\alpha)}$, with densities $\rho^{(\alpha)}$ and mean particle numbers $m_\alpha$, such that
\begin{align}
  \sum_\alpha m_\alpha&\leq N,
  \label{eq:profile-mass-budget}\\
  \sum_\alpha\Lcal(\rho^{(\alpha)})
  &\leq\liminf_n\Lcal(\rho_n),
  \label{eq:profile-L-budget}\\
  \lim_nD(\rho_n)&=\sum_\alpha D(\rho^{(\alpha)}),
  \label{eq:profile-D-identity}\\
  \sum_\alpha\Wcal(\Gamma^{(\alpha)})
  &\leq\liminf_n
  \int_{(\R^\d)^N}\sum_{i<j}\frac1{|x_i-x_j|^\s}\diff P_n.
  \label{eq:profile-cost-ineq}
\end{align}
Consequently,
\begin{equation}\label{eq:profile-gain-ineq}
  \limsup_{n\to\infty}
  \left[
  \frac12D(\rho_n)
  -\int_{(\R^\d)^N}\sum_{i<j}\frac1{|x_i-x_j|^\s}\diff P_n
  \right]
  \leq\sum_\alpha\Gcal(\Gamma^{(\alpha)}).
\end{equation}
\end{proposition}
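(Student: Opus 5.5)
The proof assembles the ingredients already in place; no new estimate is needed. First I will apply \Cref{prop:density-profiles} to the densities $\rho_n$. This is legitimate because $\int\rho_n=N$ and $\sup_n\Lcal(\rho_n)<\infty$. After passing to a subsequence, it yields profiles $\rho^{(\alpha)}$ and centers $a_n^{(\alpha)}$ satisfying the separation condition \eqref{eq:center-separation} and the budgets \eqref{eq:profile-budgets-density}. Those budgets should give $\sum_\alpha\int\rho^{(\alpha)}\leq N$ and $\sum_\alpha\Lcal(\rho^{(\alpha)})\leq\liminf_n\Lcal(\rho_n)$. I will then pass to the further subsequence from \Cref{prop:Hartree-profile}, which gives \eqref{eq:profile-D-identity} directly.

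Next comes the lift to plans, exactly as in the discussion preceding \Cref{prop:cost-profile}. I will form the translated plans $Q_n^{(\alpha)}$ on the compact space $X^N$ and extract weak-$\ast$ limits $\widetilde P^{(\alpha)}$ for all $\alpha$ simultaneously. This uses a diagonal subsequence, which is possible because the profiles are countable and $\Pcal(X^N)$ is sequentially compact. From these limits I will define the sectors $\Gamma^{(\alpha)}$ by \eqref{eq:sector-from-compactification}.

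Each $\Gamma^{(\alpha)}$ is a truncated grand-canonical probability. Symmetry of $\widetilde P^{(\alpha)}$ gives symmetric sector measures, and the binomial normalization gives total mass one, since $X^N$ is the disjoint union over $k$ of the configurations with exactly $k$ finite coordinates. The identity \eqref{eq:grand-canonical-profile-density} gives $\rho_{\Gamma^{(\alpha)}}=\rho^{(\alpha)}$. Hence $m_\alpha=\E K^{(\alpha)}=\int\rho^{(\alpha)}$, and \eqref{eq:profile-mass-budget} and \eqref{eq:profile-L-budget} follow from the density budgets. The finiteness requirements in \Cref{def:grand-canonical} also hold: $\rho^{(\alpha)}\in L^1\cap L^p$ by the budgets, and $\Wcal(\Gamma^{(\alpha)})<\infty$ by \eqref{eq:profile-cost-ineq} together with \eqref{eq:uniform-plan-pair-cost}.

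Inequality \eqref{eq:profile-cost-ineq} is exactly \Cref{prop:cost-profile}. For the consequence \eqref{eq:profile-gain-ineq}, I will write
\[
\limsup_n\Bigl[\tfrac12D(\rho_n)-C_n\Bigr]\leq\lim_n\tfrac12D(\rho_n)-\liminf_nC_n ,
\]
where $C_n$ denotes the pair cost of $P_n$. I will then substitute \eqref{eq:profile-D-identity} and \eqref{eq:profile-cost-ineq}, which gives the bound $\sum_\alpha\tfrac12D(\rho^{(\alpha)})-\sum_\alpha\Wcal(\Gamma^{(\alpha)})$.

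The only delicate point is splitting this difference into $\sum_\alpha\Gcal(\Gamma^{(\alpha)})$. This step needs both series to converge absolutely. The series $\sum_\alpha D(\rho^{(\alpha)})$ is finite because its sum equals $\lim_n D(\rho_n)$, which is bounded by \eqref{eq:D-mass-L}. The series $\sum_\alpha\Wcal(\Gamma^{(\alpha)})$ is finite by \eqref{eq:uniform-plan-pair-cost}.

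The main obstacle is making sure the density profiles and the plan limits are indexed consistently along one common subsequence. In particular, the Hartree subsequence and the weak-$\ast$ plan subsequence must be compatible. I will handle this by choosing the density and Hartree subsequences first and only then diagonalizing the plans. Later extractions do not disturb \eqref{eq:profile-D-identity}, \eqref{eq:full-density-local-limit}, or the budget inequalities, because each is a limit or a $\liminf$ bound that persists along further subsequences.
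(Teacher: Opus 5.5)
Your proposal is correct and matches the paper's proof, which combines \Cref{prop:density-profiles,prop:Hartree-profile,prop:cost-profile} with the identity \eqref{eq:grand-canonical-profile-density} and then subtracts \eqref{eq:profile-cost-ineq} from one half of \eqref{eq:profile-D-identity}. Your check that both series converge absolutely before splitting the difference is the content of \Cref{rem:profile-gain-absolute-convergence}, and you order the subsequence extractions (density profiles, then the Hartree subsequence, then the diagonal plan limits) exactly as the paper does.
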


\begin{proof}
Combine \Cref{prop:density-profiles,prop:Hartree-profile,prop:cost-profile} and \eqref{eq:grand-canonical-profile-density}.  The first two inequalities follow from \Cref{prop:density-profiles}.  \Cref{eq:profile-gain-ineq} follows by subtracting \eqref{eq:profile-cost-ineq} from one half of \eqref{eq:profile-D-identity}.
\end{proof}

\begin{remark}[Absolute convergence of profile gains]\label{rem:profile-gain-absolute-convergence}
Under the hypotheses of \Cref{prop:profile-decomposition}, the series in \eqref{eq:profile-gain-ineq} is absolutely convergent, since
\begin{equation}
  \sum_\alpha|\Gcal(\Gamma^{(\alpha)})|
  \leq\frac12\sum_\alpha D(\rho^{(\alpha)})
  +\sum_\alpha\Wcal(\Gamma^{(\alpha)})<\infty.
\end{equation}
\end{remark}

\section{Completion of fluctuating profiles}\label{sec:grand-canonical}

Let $K$ denote the particle number of the profile, and suppose that its mean is below $N$ while $\Var(K)>0$.  To show that this profile cannot attain the $N$-particle constant, we complete each $k$-particle sector by adding $N-k$ particles and construct an exact $N$-particle trial state with a strictly larger quotient.  The added particles are distributed over two annuli.  Their spatial density places weight $\varepsilon$ on a large fixed annulus.  Particle-number fluctuations then produce a positive contribution of order $\varepsilon$: the Hartree cross term is larger than the averaged interaction between the original and added particles.  The inner radius is chosen so that this contribution dominates the order-$\varepsilon$ overlap with the original density.  The remaining weight is placed on a much more distant annulus, where its interaction and local-density contributions are negligible.  All remaining errors have order $\varepsilon^p$ or $\varepsilon^2$; since $p>1$, they are smaller than the positive order-$\varepsilon$ term when $\varepsilon$ is sufficiently small.

\subsection{Completion to the fixed particle number}

The operation below is related to canonicalization constructions for grand-canonical Coulomb and Riesz states, such as \cite[Lemma~3.2]{LewinLiebSeiringer2018}, but it is performed directly at the prescribed finite particle number and retains the exact cross-term identity needed for strictness.

Let $\Gamma=(\Gamma^{(k)})_{k=0}^N$ be a truncated grand-canonical probability with sectors at most $N$, as in \Cref{def:grand-canonical}, and let $\eta$ be a probability density on $\R^\d$.  For a finite measure $Q$ on $(\R^\d)^N$, write
\begin{equation}
  \Sym_N Q:=\frac1{N!}\sum_{\sigma\in\mathfrak S_N}\sigma_\#Q.
\end{equation}
With the convention $\Gamma^{(0)}\otimes\eta^{\otimes N}=\Gamma^{(0)}\eta^{\otimes N}$, define
\begin{equation}\label{eq:canonical-completion-plan}
  P_{\Gamma,\eta}
  :=\sum_{k=0}^N\Sym_N
  \bigl(\Gamma^{(k)}\otimes\eta^{\otimes(N-k)}\bigr).
\end{equation}
This is a canonical $N$-particle probability.  Identifying $P_{\Gamma,\eta}$ with the truncated grand-canonical probability $(0,\ldots,0,P_{\Gamma,\eta})$ concentrated in the $N$-particle sector, we may evaluate $\Gcal$ on $P_{\Gamma,\eta}$.  Let $q:=N-\E K$ be the mean number of particles added by the completion.
The one-body density of the completed probability is
\begin{equation}\label{eq:completed-density}
  \rho_{P_{\Gamma,\eta}}=\rho_\Gamma+q\eta.
\end{equation}
The Hartree energy of the completed density in \eqref{eq:completed-density} is $\frac12D(\rho_\Gamma+q\eta)$, whose cross term between the original density $\rho_\Gamma$ and the mean added density $q\eta$ is $qD(\rho_\Gamma,\eta)$.  In sector $k$, however, the completion adds exactly $N-k$ particles.  The following signed density records the discrepancy between this Hartree cross term and the averaged interaction between the original and added particles.
\begin{equation}\label{eq:centered-number-density}
  h_\Gamma
  :=q\rho_\Gamma-\sum_{k=0}^N(N-k)\rho_k
  =\sum_{k=0}^N(k-\E K)\rho_k.
\end{equation}

\begin{lemma}[Completion identity]\label{lem:completion-identity}
With the notation above, assume in addition that
\begin{equation}
  D(\eta)<\infty\quad\text{and}\quad\int_{\R^\d}|h_\Gamma(x)|U_\eta(x)\diff x<\infty.
\end{equation}
Then $\int_{\R^\d} h_\Gamma$ is finite, the signed pairing $D(h_\Gamma,\eta)$ is absolutely convergent, and every energy term in \eqref{eq:completion-identity} is finite.  Moreover,
\begin{equation}\label{eq:h-mass}
  \int_{\R^\d} h_\Gamma=\Var(K).
\end{equation}
The completion identity is
\begin{equation}\label{eq:completion-identity}
  \Gcal(P_{\Gamma,\eta})-\Gcal(\Gamma)
  =D(h_\Gamma,\eta)+\frac{q-\Var(K)}{2}D(\eta).
\end{equation}
\end{lemma}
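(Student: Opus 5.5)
The identity is a direct bookkeeping computation, so I will carry it out sector by sector and then compare Hartree terms.

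\textbf{Mass of $h_\Gamma$.} Since $\int\rho_k=k\norm{\Gamma^{(k)}}=k\,\mathbb P(K=k)$ for $k\geq1$ and $\rho_0=0$, we get
\[
\int h_\Gamma=\sum_k(k-\E K)\,k\,\mathbb P(K=k)=\E[K^2]-(\E K)^2=\Var(K).
\]
All sums are finite, so $\int h_\Gamma$ is finite.

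\textbf{Pair cost of the completion.} Symmetrization preserves the pair cost, so I compute the cost of $\Gamma^{(k)}\otimes\eta^{\otimes(N-k)}$ directly. It has three kinds of pairs.
\begin{itemize}
\item Pairs among the original $k$ coordinates contribute the sector-$k$ cost.
\item The $k(N-k)$ mixed pairs contribute, by Fubini and the definition of $\rho_k=k(\pi_1)_\#\Gamma^{(k)}$, the amount $(N-k)\int U_\eta\diff\rho_k=(N-k)D(\rho_k,\eta)$.
\item Pairs among the added particles contribute $\norm{\Gamma^{(k)}}\binom{N-k}{2}D(\eta)$.
\end{itemize}
Summing over $k$ gives
\[
\Wcal(P_{\Gamma,\eta})=\Wcal(\Gamma)+\sum_k(N-k)D(\rho_k,\eta)+\tfrac12\E[(N-K)(N-K-1)]\,D(\eta).
\]
The density of $P_{\Gamma,\eta}$ is $\rho_\Gamma+q\eta$, since $\sum_k(N-k)\norm{\Gamma^{(k)}}=N-\E K=q$. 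This confirms \eqref{eq:completed-density}.

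\textbf{Hartree term.} Expanding,
\[
\tfrac12D(\rho_\Gamma+q\eta)=\tfrac12D(\rho_\Gamma)+qD(\rho_\Gamma,\eta)+\tfrac{q^2}{2}D(\eta).
\]
Subtracting the pair cost, the cross terms combine into
\[
qD(\rho_\Gamma,\eta)-\sum_k(N-k)D(\rho_k,\eta)=D(h_\Gamma,\eta).
\]
The $D(\eta)$ coefficient is
\[
\tfrac12\bigl(q^2-\E[(N-K)^2]+\E[N-K]\bigr)=\tfrac12\bigl(q-\Var(N-K)\bigr)=\tfrac{q-\Var K}{2}.
\]
This yields \eqref{eq:completion-identity}.

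\textbf{Finiteness (the main care point).} The hypothesis $\int|h_\Gamma|U_\eta<\infty$ alone does not control each cross term $D(\rho_k,\eta)$. I would first show each of these terms is finite separately, so that the rearrangements above are legitimate subtractions of finite numbers rather than formal $\infty-\infty$.
\begin{itemize}
\item By the standing assumptions, $\rho_\Gamma\in L^1\cap L^p$, so HLS \eqref{eq:HLS-interpolation} gives $D(\rho_\Gamma)<\infty$.
\item By assumption $D(\eta)<\infty$, so by Cauchy--Schwarz for the positive-definite Riesz kernel (or HLS when $\eta\in L^1\cap L^p$), $D(\rho_\Gamma,\eta)\leq D(\rho_\Gamma)^{1/2}D(\eta)^{1/2}<\infty$.
\item Since $0\leq\rho_k\leq\rho_\Gamma$, every $D(\rho_k,\eta)$ is then finite, and $D(h_\Gamma,\eta)$ is absolutely convergent, consistent with the stated hypothesis.
\item $\Wcal(\Gamma)$ is finite by the standing convention on $\Gamma$.
\end{itemize}
Hence every term in the completion identity is finite.
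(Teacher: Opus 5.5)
Your proposal is correct and follows the paper's proof essentially step for step. It uses the same mass computation for $h_\Gamma$, the same sector-wise split of the completed pair cost into original, mixed, and added pairs, and the same coefficient identity $\frac12\bigl(q^2-\E[(N-K)^2]+q\bigr)=\frac{q-\Var(K)}{2}$. The finiteness step differs only cosmetically: you apply the energy Cauchy--Schwarz inequality to $\rho_\Gamma$ and then use $\rho_k\leq\rho_\Gamma$, whereas the paper first bounds $D(\rho_k)\leq D(\rho_\Gamma)$ and then applies Cauchy--Schwarz to each $\rho_k$.
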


\begin{proof}
Since $0\leq\rho_k\leq\rho_\Gamma$, we have $D(\rho_k)\leq D(\rho_\Gamma)<\infty$.  Positive definiteness of the Riesz form yields the energy Cauchy--Schwarz inequality $D(\rho_k,\eta)^2\leq D(\rho_k)D(\eta)<\infty$.  Thus, all unsigned cross terms in the expansion below are finite; the signed term is finite by hypothesis.
First, unpacking the definition of $h_\Gamma$ gives
\begin{equation}
  \int_{\R^\d} h_\Gamma=q\E K-\E[K(N-K)]=\E K^2-(\E K)^2=\Var(K).
\end{equation}
After expanding $\Gcal(P_{\Gamma,\eta})$ according to \eqref{eq:grand-canonical-gain} and canceling
$\Gcal(\Gamma)=\frac12D(\rho_\Gamma)-\Wcal(\Gamma)$, the remaining terms are
\begin{align}
 &qD(\rho_\Gamma,\eta)-\sum_{k=0}^N(N-k)D(\rho_k,\eta)
   +\left(\frac{q^2}{2}-\E\binom{N-K}{2}\right)D(\eta)\notag\\
 &=D\left(q\rho_\Gamma-\sum_{k=0}^N(N-k)\rho_k,\eta\right)
   +\frac12\left(q^2-
   \E\bigl[(N-K)(N-K-1)\bigr]\right)D(\eta)\notag\\
 &=D(h_\Gamma,\eta)
   +\frac12\left(q^2-\bigl(q^2+\Var(K)\bigr)+q\right)D(\eta)\notag\\
 &=D(h_\Gamma,\eta)+\frac{q-\Var(K)}{2}D(\eta),
\end{align}
where $\E[N-K]=q$ and $\E[(N-K)^2]=q^2+\Var(K)$.  This proves
\eqref{eq:completion-identity}.
\end{proof}

\subsection{Annular estimates}

To prepare the strict-completion argument below, we record two far-field estimates: the Riesz interaction with a distant annulus is governed to leading order by total signed mass, whereas the corresponding local-density overlap is of lower order.

Fix a smooth radial probability density $\nu$ supported in
\begin{equation}
  \{x\in\R^\d:1<|x|<2\},
\end{equation}
and define
\begin{equation}\label{eq:nuR}
  \nu_R(x):=R^{-\d}\nu(x/R).
\end{equation}

\begin{lemma}[Annular asymptotics]\label{lem:annulus-asymptotics}
Let $f\in L^1(\R^\d)$ be real-valued.  Then
\begin{equation}\label{eq:signed-annulus}
  R^\s D(f,\nu_R)\longrightarrow
  \left(\int_{\R^\d}\frac{\nu(y)}{|y|^\s}\diff y\right)\int_{\R^\d} f
  \qquad(R\to\infty).
\end{equation}
If $\rho\geq0$ belongs to $L^1(\R^\d)$, then
\begin{equation}\label{eq:overlap-annulus}
  R^\s\int_{\R^\d}\rho^{\s/\d}\nu_R\longrightarrow0.
\end{equation}
\end{lemma}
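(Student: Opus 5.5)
For \eqref{eq:signed-annulus}, I will rescale.  Substituting $y=Rz$ in the definition of $D(f,\nu_R)$ gives
\[
  R^\s D(f,\nu_R)=\int_{\R^\d} f(x)\,\Phi(x/R)\diff x,
  \qquad
  \Phi(w):=\int_{\R^\d}\frac{\nu(z)}{|w-z|^\s}\diff z = U_\nu(w).
\]
Since $\nu$ is smooth, bounded and compactly supported, and $\s<\d$, the potential $\Phi$ is bounded on $\R^\d$ and continuous.  Indeed, $|\cdot|^{-\s}$ is locally integrable, so $\Phi$ is a convolution of an $L^1_{\mathrm{loc}}$ kernel with a bounded compactly supported function.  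Moreover, $\Phi(0)=\int\nu(z)|z|^{-\s}\diff z$.  Since $f\in L^1$ and $\Phi(x/R)\to\Phi(0)$ pointwise with $|f(x)\Phi(x/R)|\leq\norm{\Phi}_\infty|f(x)|$, dominated convergence gives the limit $\Phi(0)\int f$.  This also shows that the signed pairing is absolutely convergent, as the definition of $D(f,g)$ requires, because $\int|f|U_{\nu_R}\leq R^{-\s}\norm{\Phi}_\infty\norm{f}_1$.

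For \eqref{eq:overlap-annulus}, I first rescale again:
\[
  R^\s\int_{\R^\d}\rho^{\s/\d}\nu_R=R^{\s-\d}\int_{\R^\d}\rho(x)^{\s/\d}\nu(x/R)\diff x.
\]
Since $\s/\d<1$, I apply Hölder with exponents $\d/\s$ and $\d/(\d-\s)$ on the support $\{R<|x|<2R\}$, with $\nu\leq\norm{\nu}_\infty$:
\[
  \int\rho^{\s/\d}\nu(\cdot/R)\leq\norm{\nu}_\infty\Bigl(\int_{|x|>R}\rho\Bigr)^{\s/\d}\,\bigl|B_{2R}\bigr|^{1-\s/\d}
  \leq C\,R^{\d-\s}\Bigl(\int_{|x|>R}\rho\Bigr)^{\s/\d}.
\]
Multiplying by $R^{\s-\d}$ leaves $C(\int_{|x|>R}\rho)^{\s/\d}$, which tends to zero by integrability of $\rho$.

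Neither step has a real obstacle.  The only point needing care is the boundedness and continuity of $\Phi$ at the origin, which holds because $\nu$ vanishes near $0$ and is bounded, so the singularity is integrable uniformly in $w$.
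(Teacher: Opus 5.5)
Your proposal is correct and follows the paper's proof essentially step for step. For \eqref{eq:signed-annulus} you use the scaling identity $U_{\nu_R}(x)=R^{-\s}U_\nu(x/R)$, boundedness and continuity of $U_\nu$, and dominated convergence; for \eqref{eq:overlap-annulus} you apply H\"older on the annulus with $\nu_R\leq CR^{-\d}$. Bounding by $\int_{|x|>R}\rho$ and $|B_{2R}|$ instead of by the annulus itself is harmless.
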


\begin{proof}
The potential of $\nu_R$ is
\begin{equation}
  U_{\nu_R}(x)=R^{-\s}U_\nu(x/R).
\end{equation}
Because $0<\s<\d$ and $\nu$ is smooth and compactly supported, $U_\nu$ is bounded and continuous, with $U_\nu(0)=\int_{\R^\d}\frac{\nu(y)}{|y|^\s}\diff y>0$.  In particular, $D(f,\nu_R)$ is absolutely convergent for every real-valued $f\in L^1$.  Dominated convergence gives \eqref{eq:signed-annulus}.

Since $\nu_R\leq CR^{-\d}$ and is supported in $\{R<|x|<2R\}$, H\"older's inequality gives
\begin{equation}
  R^\s\int_{\R^\d}\rho^{\s/\d}\nu_R
  \leq CR^{\s-\d}\int_{R<|x|<2R}\rho^{\s/\d}
  \leq C\left(\int_{R<|x|<2R}\rho\right)^{\s/\d}.
\end{equation}
The last factor tends to zero because $\rho\in L^1$.
\end{proof}

\subsection{Strict completion when the particle number fluctuates}

Combining the completion identity with the preceding annular estimates gives the strict comparison needed in the compactness argument: under the hypotheses below, a truncated grand-canonical probability with fluctuating particle number has quotient strictly below the fixed-$N$ value.

\begin{proposition}[Strict completion]\label{prop:strict-completion}
Let $\Gamma$ be a truncated grand-canonical probability with sectors at most $N$ such that
\begin{equation}\label{eq:fluctuating-profile-hypotheses}
  0<\E K<N,
  \qquad
  \Var(K)>0,
  \qquad
  0<\Lcal(\rho_\Gamma)<\infty.
\end{equation}
Then
\begin{equation}\label{eq:fluctuating-subcritical}
  \frac{\Gcal(\Gamma)}{\Lcal(\rho_\Gamma)}<\Lambda_N.
\end{equation}
\end{proposition}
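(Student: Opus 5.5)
We complete $\Gamma$ to the exact $N$-particle plan $P_{\Gamma,\eta}$ of \eqref{eq:canonical-completion-plan}, with the two-scale added density
\[
  \eta=\varepsilon\nu_R+(1-\varepsilon)\nu_S,\qquad 1\ll R\ll S,
\]
and show that for suitable parameters
\[
  \frac{\Gcal(P_{\Gamma,\eta})}{\Lcal(\rho_\Gamma+q\eta)}>\frac{\Gcal(\Gamma)}{\Lcal(\rho_\Gamma)}.
\]
Since $P_{\Gamma,\eta}$ is admissible for $\Wcal_N(\rho_\Gamma+q\eta)$, its quotient is at most $\Lambda_N$, and strictness follows. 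If $\Gcal(\Gamma)\le0$ there is nothing to prove, because $\Lambda_N\ge\Lambda_1>0$. So we assume $\Gcal(\Gamma)>0$ and write $\lambda:=\Gcal(\Gamma)/\Lcal(\rho_\Gamma)$.

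\textbf{Numerator.} Apply \Cref{lem:completion-identity}; its hypotheses hold because $U_{\nu_R}$ and $U_{\nu_S}$ are bounded and $h_\Gamma\in L^1$. Expand $D(h_\Gamma,\eta)$ and $D(\eta)$ bilinearly:
\begin{itemize}
\item By \Cref{lem:annulus-asymptotics}, $\varepsilon D(h_\Gamma,\nu_R)=\varepsilon R^{-\s}\bigl(c_\nu\Var(K)+o_R(1)\bigr)$, where $c_\nu=\int\nu|y|^{-\s}>0$.
\item $D(h_\Gamma,\nu_S)=O(S^{-\s})$.
\item $D(\eta)\le\varepsilon^2R^{-\s}D(\nu)+C\varepsilon R^{-\s}(R/S)^{\s}+S^{-\s}D(\nu)$.
\end{itemize}
Hence $\Gcal(P_{\Gamma,\eta})-\Gcal(\Gamma)\ge \varepsilon R^{-\s}\bigl(c_\nu\Var(K)-o_R(1)-C\varepsilon-o_{S/R}(1)\bigr)-CS^{-\s}$. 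This holds regardless of the sign of $q-\Var(K)$, since the $D(\eta)$ term is bounded in absolute value as above.

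\textbf{Denominator.} By convexity of $t\mapsto t^p$, on the support of $\nu_R$ we have
\[
  (\rho_\Gamma+q\varepsilon\nu_R)^p-\rho_\Gamma^p\le p(\rho_\Gamma+q\varepsilon\nu_R)^{p-1}q\varepsilon\nu_R,
\]
and $(a+b)^{p-1}\le a^{p-1}+b^{p-1}$ because $p-1=\s/\d<1$. So the $\nu_R$ contribution is at most
\[
  pq\varepsilon\int\rho_\Gamma^{\s/\d}\nu_R+p(q\varepsilon)^p\Lcal(\nu_R).
\]
By \eqref{eq:overlap-annulus} the first term is $\varepsilon R^{-\s}o_R(1)$, and the second equals $Cp\,\varepsilon^pR^{-\s}$. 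The $\nu_S$ piece is handled the same way with the full density, giving $O(S^{-\s})$, since $S\gg R$ and the overlaps are $o(1)$. Thus $\Lcal(\rho_\Gamma+q\eta)\le\Lcal(\rho_\Gamma)+\varepsilon R^{-\s}\bigl(o_R(1)+C\varepsilon^{p-1}\bigr)+CS^{-\s}$.

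\textbf{Conclusion.} Let $\delta$ denote the bracket in the numerator bound minus $\lambda$ times the denominator error, all per unit $\varepsilon R^{-\s}$:
\[
  \delta:=c_\nu\Var(K)-o_R(1)-C\varepsilon-\lambda\bigl(o_R(1)+C\varepsilon^{p-1}\bigr)-o_{S/R}(1).
\]
It suffices that $\varepsilon R^{-\s}\delta>CS^{-\s}(1+\lambda)$. First choose $\varepsilon$ small so that $C\varepsilon+C\lambda\varepsilon^{p-1}<c_\nu\Var(K)/4$; this uses $p>1$. Then choose $R$ large to kill the $o_R(1)$ terms. Finally send $S\to\infty$, which makes $o_{S/R}(1)$ and $S^{-\s}$ negligible relative to the fixed $\varepsilon R^{-\s}$. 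This gives the strict inequality.

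\textbf{Main obstacle.} The delicate step is keeping the $\nu_S$ cross terms genuinely lower order. The bound $D(\nu_R,\nu_S)\lesssim S^{-\s}$ needs the annuli to be separated, which is automatic once $S>4R$, since $U_{\nu_S}\le CS^{-\s}$ everywhere. The other point to check is that the completion-identity pairing with $h_\Gamma$ is legitimate; this follows because the $\rho_k$ are absolutely continuous and in $L^1$.
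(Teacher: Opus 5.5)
Your proposal is correct and follows the paper's proof. You use the same completion $P_{\Gamma,\eta}$ with a two-annulus added density, the completion identity together with the annular asymptotics, and the same comparison of an order-$\varepsilon$ gain against order-$\varepsilon^p$, $\varepsilon^2$ and $S^{-\s}$ errors. The differences are only in bookkeeping: you fix $\varepsilon$ before $R$ and then send $S\to\infty$ independently, whereas the paper fixes $R$ first and sets $S_\varepsilon=R\varepsilon^{-2/\s}$; and you derive the power inequality with $C_p=p$ from convexity and subadditivity of $t^{p-1}$.
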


\begin{proof}
To convert the given truncated grand-canonical probability $\Gamma$ into a canonical $N$-particle trial probability, we add the missing particles using a probability density supported on two spatial scales.  A fraction $\varepsilon$ of the missing particles is placed on a fixed large annulus, where it detects the positive total signed mass $\int_{\R^\d} h_\Gamma=\Var(K)$, which governs the leading interaction with the distant annulus.  The remaining fraction is placed on a much larger annulus so that its Riesz and local-density costs are negligible.  We first choose the inner radius, then choose $\varepsilon$, while the outer radius depends on $\varepsilon$.

Write $\rho=\rho_\Gamma$, $h=h_\Gamma$, and recall that $q=N-\E K$.  Set
\begin{equation}
  r:=\frac{\Gcal(\Gamma)}{\Lcal(\rho)}.
\end{equation}
If $r<0$ (recall that $\mathcal{G}$ is signed), the conclusion follows trivially from $\Lambda_N>0$.  Therefore, assume $r\geq0$.

By \eqref{eq:h-mass} and \Cref{lem:annulus-asymptotics},
\begin{equation}\label{eq:positive-h-shell}
  D(h,\nu_R)\sim\frac{\Var(K)}{R^\s}
  \int_{\R^\d}\frac{\nu(y)}{|y|^\s}\diff y>0.
\end{equation}
Also, with
\begin{equation}
  J_R:=\int_{\R^\d}\rho^{p-1}\nu_R
  =\int_{\R^\d}\rho^{\s/\d}\nu_R,
\end{equation}
we have $R^sJ_R\to0$.  Hence, we may fix $R$ sufficiently large that
\begin{equation}\label{eq:R-choice}
  pqr J_R<\frac12D(h,\nu_R).
\end{equation}

For $0<\varepsilon<1$, set
\begin{equation}\label{eq:two-scale-eta}
  S_\varepsilon:=R\varepsilon^{-2/\s},
  \qquad
  \eta_\varepsilon
  :=\varepsilon\nu_R+(1-\varepsilon)\nu_{S_\varepsilon}.
\end{equation}
Since $\nu_R$ and $\nu_{S_\varepsilon}$ are supported in the annuli $\{R<|x|<2R\}$ and $\{S_\varepsilon<|x|<2S_\varepsilon\}$, respectively, these annuli are disjoint for sufficiently small $\varepsilon$.  Set $P_\varepsilon:=P_{\Gamma,\eta_\varepsilon}$.  The density $\eta_\varepsilon$ is smooth and compactly supported, so $D(\eta_\varepsilon)<\infty$ and $U_{\eta_\varepsilon}\in L^\infty(\R^\d)$.  Since $h\in L^1(\R^\d)$, the hypotheses of \Cref{lem:completion-identity} are satisfied.  By \Cref{lem:completion-identity}, the change in the grand-canonical gain is
\begin{equation}\label{eq:deltaG-fluctuating}
  \delta\Gcal_\varepsilon
  :=\Gcal(P_\varepsilon)-\Gcal(\Gamma)
  =D(h,\eta_\varepsilon)+\frac{q-\Var(K)}{2}D(\eta_\varepsilon).
\end{equation}
Since $U_{\nu_{S_\varepsilon}}(x)=S_\varepsilon^{-\s}U_\nu(x/S_\varepsilon)$ and $U_\nu$ is bounded,
\begin{equation}\label{eq:h-outer-estimate}
  \left|D(h,\nu_{S_\varepsilon})\right|
  \leq\norm{h}_1\norm{U_\nu}_\infty S_\varepsilon^{-\s}
  =\norm{h}_1\norm{U_\nu}_\infty\frac{\varepsilon^2}{R^\s}.
\end{equation}
Moreover, since $D(\nu_T)=T^{-\s}D(\nu)$ and $D(\nu_R,\nu_{S_\varepsilon})=O(S_\varepsilon^{-\s})$, we have
\begin{equation}\label{eq:Deta-estimate}
  D(\eta_\varepsilon)
  =O\left(\frac{\varepsilon^2}{R^\s}\right).
\end{equation}
Combining \eqref{eq:deltaG-fluctuating}, \eqref{eq:h-outer-estimate}, and \eqref{eq:Deta-estimate}, we obtain
\begin{equation}\label{eq:deltaG-leading}
  \delta\Gcal_\varepsilon
  =\varepsilon D(h,\nu_R)
  +O\left(\frac{\varepsilon^2}{R^\s}\right).
\end{equation}

Let
\begin{equation}
  \delta\Lcal_\varepsilon
  :=\Lcal(\rho+q\eta_\varepsilon)-\Lcal(\rho).
\end{equation}
Since $1<p<2$, the elementary inequality
\begin{equation}\label{eq:power-expansion}
  (a+b)^p\leq a^p+p a^{p-1}b+C_p b^p,\qquad a,b\geq0,
\end{equation}
gives
\begin{equation}\label{eq:deltaL-bound0}
  \delta\Lcal_\varepsilon
  \leq pq\int_{\R^\d}\rho^{p-1}\eta_\varepsilon
  +C_pq^p\Lcal(\eta_\varepsilon).
\end{equation}

The outer-annulus estimate in \Cref{lem:annulus-asymptotics} gives
\begin{equation}
  \int_{\R^\d}\rho^{p-1}\nu_{S_\varepsilon}
  =O(S_\varepsilon^{-\s})
  =O(\varepsilon^2/R^\s),
\end{equation}
and disjointness of the two annuli gives
\begin{equation}
  \Lcal(\eta_\varepsilon)
  =\Lcal(\nu)
  \left(
  \frac{\varepsilon^p}{R^\s}
  +\frac{(1-\varepsilon)^p}{S_\varepsilon^\s}
  \right).
\end{equation}
Consequently,
\begin{equation}\label{eq:deltaL-leading}
  \delta\Lcal_\varepsilon
  \leq pq\varepsilon J_R
  +O\left(\frac{\varepsilon^p}{R^\s}\right)
  +O\left(\frac{\varepsilon^2}{R^\s}\right).
\end{equation}
Combining \eqref{eq:R-choice}, \eqref{eq:deltaG-leading}, and \eqref{eq:deltaL-leading},
\begin{equation}
  \delta\Gcal_\varepsilon-r\delta\Lcal_\varepsilon
  \geq\varepsilon\left[D(h,\nu_R)-pqrJ_R\right]
  -O\left(\frac{\varepsilon^p+\varepsilon^2}{R^\s}\right)>0
\end{equation}
for all sufficiently small $\varepsilon$, since $p>1$.  The preceding inequality and the definitions of $r$, $\delta\Gcal_\varepsilon$, and $\delta\Lcal_\varepsilon$ give the first inequality in the display below.  Since $P_\varepsilon$ is an $N$-particle trial probability with one-body density $\rho+q\eta_\varepsilon$, the definition of $\Wcal_N$ in \eqref{eq:WN-intro}, together with the definition of $\Gcal_N$ in \eqref{eq:GN-intro}, gives $\Gcal(P_\varepsilon)\leq\Gcal_N(\rho+q\eta_\varepsilon)$.  Moreover, $\rho+q\eta_\varepsilon\in\Acal_N$ (recall \eqref{eq:AN-intro}).  Consequently,
\begin{equation}
  r
  <\frac{\Gcal(P_\varepsilon)}{\Lcal(\rho+q\eta_\varepsilon)}
  \leq\frac{\Gcal_N(\rho+q\eta_\varepsilon)}{\Lcal(\rho+q\eta_\varepsilon)}
  \leq\Lambda_N.
\end{equation}
\end{proof}

\section{Compactness under a strict lower-particle inequality}\label{sec:attainment}

We now apply the profile decomposition to a normalized maximizing sequence.  A deterministic profile with $k<N$ particles is controlled by $\Lambda_k$, whereas a profile with non-deterministic particle number is strictly improved by \Cref{prop:strict-completion}.  The only remaining possibility is a full $N$-particle profile, which gives the optimizer.

\begin{proposition}[Attainment]\label{prop:conditional-attainment}
Let $N\geq1$.  Assume
\begin{equation}\label{eq:strict-lower-threshold}
  \Lambda_N>\max_{1\leq k<N}\Lambda_k,
\end{equation}
where the maximum over the empty set is omitted when $N=1$.  Then $\Lambda_N$ is attained.
\end{proposition}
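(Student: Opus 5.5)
Take a maximizing sequence $\rho_n\in\Acal_N$ and use the scaling invariance \eqref{eq:scaling-all} to normalize $\Lcal(\rho_n)=1$, so that $\Gcal_N(\rho_n)\to\Lambda_N$. For each $n$, choose an optimal plan $P_n$ from \Cref{lem:OT-existence}. The pair cost $\Wcal_N(\rho_n)\le\frac12D(\rho_n)\le C$ is bounded by \eqref{eq:D-mass-L}, so \eqref{eq:uniform-plan-pair-cost} holds. \Cref{prop:profile-decomposition} then yields profiles $\Gamma^{(\alpha)}$ with densities $\rho^{(\alpha)}$ and means $m_\alpha$, satisfying
\[
\Lambda_N\le\sum_\alpha\Gcal(\Gamma^{(\alpha)}),\qquad \sum_\alpha\Lcal(\rho^{(\alpha)})\le1,\qquad \sum_\alpha m_\alpha\le N .
\]
This also shows that at least one profile is nonzero, since $\Lambda_N>0$.

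The central step is a profile-wise bound. I will prove that $\Gcal(\Gamma^{(\alpha)})\le\Lambda_N\Lcal(\rho^{(\alpha)})$ for every $\alpha$. Moreover, the inequality is \emph{strict} whenever $\rho^{(\alpha)}\neq0$ and $m_\alpha<N$. The argument splits into three cases.

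\textbf{Case 1: $K$ deterministic, $K=k$ with $1\le k<N$.} Here $\Gamma^{(\alpha,k)}$ is a $k$-particle symmetric probability with density $\rho^{(\alpha)}\in\Acal_k$. Hence $\Gcal(\Gamma^{(\alpha)})\le\Gcal_k(\rho^{(\alpha)})\le\Lambda_k\Lcal<\Lambda_N\Lcal$ by \eqref{eq:strict-lower-threshold}, using $\Lcal(\rho^{(\alpha)})>0$.

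\textbf{Case 2: $K$ fluctuating.} If $\Var K>0$ and $0<m_\alpha<N$, the strict inequality is exactly \Cref{prop:strict-completion}. The hypothesis $\Lcal(\rho^{(\alpha)})<\infty$ holds by the $\Lcal$-budget.

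\textbf{Case 3: $m_\alpha=N$.} Then $K=N$ almost surely, so the profile is canonical and $\Gcal(\Gamma^{(\alpha)})\le\Gcal_N(\rho^{(\alpha)})\le\Lambda_N\Lcal(\rho^{(\alpha)})$, this time non-strictly.

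Zero profiles contribute nothing. When $N=1$, Case 1 is vacuous and Case 2 cannot occur, because $K\in\{0,1\}$ with $0<\E K<1$ gives a fluctuating profile. That sub-case is still covered by \Cref{prop:strict-completion} with $N=1$, since the proposition only needs $\Lambda_1>0$.

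Summing the profile-wise bounds over $\alpha$ gives
\[
\Lambda_N\le\sum_\alpha\Gcal(\Gamma^{(\alpha)})\le\Lambda_N\sum_\alpha\Lcal(\rho^{(\alpha)})\le\Lambda_N .
\]
Equality must hold throughout, and the series are absolutely convergent by \Cref{rem:profile-gain-absolute-convergence}. Equality forces every nonzero profile to fall in Case 3, so it has $m_\alpha=N$. The mass budget $\sum_\alpha m_\alpha\le N$ then leaves exactly one nonzero profile, say $\alpha=1$. For it we have $\Lcal(\rho^{(1)})=1$ and $\Gcal_N(\rho^{(1)})\ge\Gcal(\Gamma^{(1)})=\Lambda_N$. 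Also $\rho^{(1)}\in\Acal_N$, since $\int\rho^{(1)}=N$ and $\rho^{(1)}\in L^p$ by lower semicontinuity. Therefore $\rho^{(1)}$ attains $\Lambda_N$.

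The main obstacle is the bookkeeping in the profile-wise bound. First, a profile can have $\Lcal(\rho^{(\alpha)})=0$ only if $\rho^{(\alpha)}=0$, in which case $\Gcal\le0$, so such profiles are harmless. Second, the strictness must survive a countable sum. This is not a problem, because equality in the chain only requires that no single strict term appears, which the case analysis rules out. I expect the delicate point to be the identification $\Gcal(\Gamma)\le\Gcal_k(\rho)$ for a sector concentrated at $k$. I would first check that $\Gamma^{(\alpha,k)}$ is a symmetric probability measure on $(\R^\d)^k$ whose one-body density is $\rho^{(\alpha)}$, which follows from \eqref{eq:grand-canonical-profile-density}. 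It then serves as an admissible competitor in $\Wcal_k$.
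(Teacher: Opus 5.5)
Your proof is correct and follows the paper's argument. It uses the same normalized maximizing sequence and profile decomposition, and the same case split. Deterministic profiles with $k<N$ are handled by $\Lambda_k<\Lambda_N$, fluctuating ones by \Cref{prop:strict-completion}, and full-mass profiles are canonical $N$-particle plans. The fluctuating case also covers $N=1$, where your sentence saying Case~2 ``cannot occur'' is a slip: it does occur and is handled exactly as you go on to say.

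The one streamlining is that you sum the linear bounds $\Gcal(\Gamma^{(\alpha)})\le\Lambda_N\Lcal(\rho^{(\alpha)})$ and note that a convergent sum of nonnegative defects equal to zero forces each defect to vanish. This legitimately replaces the paper's contradiction argument, including its upgrade of the pointwise gaps to a uniform gap $\sup_\alpha r_\alpha<\Lambda_N$ via the small-mass Hardy--Littlewood--Sobolev bound.
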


\begin{proof}
Choose a maximizing sequence $\rho_n\in\Acal_N$.  By the scaling invariance \eqref{eq:scaling-all}, we may assume without loss of generality that
\begin{equation}\label{eq:max-seq-normalization}
  \Lcal(\rho_n)=1.
\end{equation}
Let $P_n$ be corresponding optimal plans given by \Cref{lem:OT-existence}.  Then
\begin{equation}\label{eq:max-seq-gain}
  \frac12D(\rho_n)
  -\int_{(\R^\d)^N}\sum_{i<j}\frac1{|x_i-x_j|^\s}\diff P_n
  \longrightarrow\Lambda_N.
\end{equation}
By \eqref{eq:D-mass-L} and \eqref{eq:max-seq-normalization}, the Hartree energies are uniformly bounded.  Hence, \eqref{eq:max-seq-gain} also shows that the pair costs of $P_n$ are uniformly bounded.  Apply \Cref{prop:profile-decomposition}.  The profile family cannot be empty, since then \eqref{eq:profile-gain-ineq} would contradict $\Lambda_N>0$.  For every nonzero profile, define
\begin{equation}\label{eq:profile-quotient}
  r_\alpha
  :=\frac{\Gcal(\Gamma^{(\alpha)})}
  {\Lcal(\rho^{(\alpha)})}.
\end{equation}
To obtain a contradiction, assume that every profile has mean particle number $m_\alpha<N$.
If the particle number of $\Gamma^{(\alpha)}$ fluctuates (i.e.,~$\Var(K)>0$), \Cref{prop:strict-completion} gives
\begin{equation}\label{eq:profile-fluctuating-gap}
  r_\alpha<\Lambda_N.
\end{equation}
If it is deterministic, then it equals an integer $k<N$.  A nonzero profile cannot have $k=0$; hence $1\leq k<N$, the profile is a genuine canonical $k$-particle probability, and
\begin{equation}\label{eq:profile-deterministic-gap}
  r_\alpha\leq\Lambda_k<\Lambda_N.
\end{equation}
The second inequality follows from the hypothesis \eqref{eq:strict-lower-threshold}.
Thus, every profile has a pointwise strict gap.

There can be only finitely many profiles whose mean particle number $m_\alpha$ exceeds a fixed cutoff.  Those finitely many have a positive minimum gap from $\Lambda_N$, whereas every remaining small-mass profile has a uniformly small quotient by the Hardy--Littlewood--Sobolev estimate.  We now use this dichotomy to upgrade the pointwise bounds $r_\alpha<\Lambda_N$ to the uniform estimate $\sup_\alpha r_\alpha<\Lambda_N$.  Indeed, \eqref{eq:D-mass-L} and nonnegativity of the pair cost give
\begin{equation}\label{eq:small-profile-bound}
  r_\alpha
  \leq\frac{\frac12D(\rho^{(\alpha)})}
  {\Lcal(\rho^{(\alpha)})}
  \leq C_{\s,\d} m_\alpha^{1-\s/\d}.
\end{equation}
Choose $\delta>0$ so small that $C_{\s,\d}\delta^{1-\s/\d}<\Lambda_N/2$.  All profiles with $m_\alpha<\delta$ have quotient at most $\Lambda_N/2$.  Since $\sum_\alpha m_\alpha\leq N$, only finitely many profiles have mass at least $\delta$, and each of these finitely many has quotient strictly below $\Lambda_N$.  Therefore,
\begin{equation}\label{eq:uniform-profile-gap}
  r_*:=\sup_\alpha r_\alpha<\Lambda_N.
\end{equation}
Moreover, $r_*>0$.  Indeed, the profile gain inequality \eqref{eq:profile-gain-ineq} and $\Lambda_N>0$ give
\begin{equation}
  0<\Lambda_N
  \leq\sum_\alpha\Gcal(\Gamma^{(\alpha)}),
\end{equation}
so some profile has positive quotient.  Therefore, the inequality \eqref{eq:profile-L-budget}, using \eqref{eq:max-seq-normalization}, implies
\begin{equation}
  r_*\sum_\alpha\Lcal(\rho^{(\alpha)})\leq r_*.
\end{equation}
Combining the preceding two displays with \eqref{eq:profile-quotient} and \eqref{eq:uniform-profile-gap}, we obtain the contradiction
\begin{equation}
  \Lambda_N\leq\sum_\alpha\Gcal(\Gamma^{(\alpha)})
  =\sum_\alpha r_\alpha\Lcal(\rho^{(\alpha)})
  \leq r_*\sum_\alpha\Lcal(\rho^{(\alpha)})
  \leq r_*<\Lambda_N.
\end{equation}

Hence, some profile has $m_\alpha=N$.  Since its particle number satisfies $0\le K_\alpha \le N$, it follows that this profile is a canonical $N$-particle probability.  The inequality \eqref{eq:profile-mass-budget} implies this is the unique nonzero profile.  Then \eqref{eq:profile-gain-ineq} and \eqref{eq:profile-L-budget} imply
\begin{equation}
  \Lambda_N
  \leq\Gcal(\Gamma^{(\alpha)})
  \leq\Lambda_N\Lcal(\rho^{(\alpha)})
  \leq\Lambda_N.
\end{equation}
Since $\Lambda_N>0$, equality holds throughout.  In particular,
\begin{equation}
  \Lcal(\rho^{(\alpha)})=1,
  \qquad
  \Gcal(\Gamma^{(\alpha)})=\Lambda_N,
\end{equation}
so $\rho^{(\alpha)}$ is an optimizer.
\end{proof}

\section{Strict increase in the particle number}\label{sec:one-particle-extension}

The preceding compactness argument produces an optimizer once the fixed-$N$ value $\Lambda_N$ is strictly larger than all lower-particle values $\Lambda_k$ for $k<N$.  To continue the induction, one must create that separation at the next particle number.  Compact support is used only here.  If $P$ is an optimal $N$-particle plan associated with a compactly supported optimizing density $\rho$, then $X=(x_1,\ldots,x_N)\sim P$ belongs almost surely to $K^N$ for some compact set $K$.  This leaves an exterior region in which the interaction between $X$ and an added particle can be varied without paying a first-order local-density cost.

\begin{proposition}[Di Marino--Lelotte \cite{DiMarinoLelotte2026}]\label{prop:DML}
Let $\d\geq1$ and $0<\s<\d$.  If the supremum defining $\Lambda_N$ is attained, then every optimizing density is compactly supported.
\end{proposition}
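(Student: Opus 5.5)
This proposition is imported from Di Marino and Lelotte \cite{DiMarinoLelotte2026}. Within the present paper it is enough to check that their hypotheses match our normalization. Their $D$ is the full double integral, their range is $0<\s<\d$, and their fixed-$N$ quotient is ours. For orientation, the following is how we would reconstruct their argument.

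Let $\rho$ attain $\Lambda_N$ and normalize $\Lcal(\rho)=1$. The first step is the Euler--Lagrange condition, obtained from first variations $\rho\mapsto\rho+t\phi$ with $\int\phi=0$ and $\phi\ge0$ off $\supp\rho$. For this we use Kantorovich duality for $\Wcal_N$ with Riesz cost: there is a Kantorovich potential $u$ with
\begin{equation*}
\Wcal_N(\rho)=\int u\rho,
\qquad
\sum_{i<j}|x_i-x_j|^{-\s}\ge\sum_i u(x_i)/N\cdot N\ \text{(suitably normalized)},
\end{equation*}
and $u$ bounded and Lipschitz away from the diagonal, following \cite{ButtazzoChampionDePascale2018,ColomboDiMarinoStra2019}. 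Since $\Wcal_N$ is concave, $u$ lies in its superdifferential, and the one-sided derivative of $\Gcal_N$ is bounded below by $\int(U_\rho-u)\phi$. Maximality of the quotient then gives
\begin{equation*}
U_\rho-u\le p\Lambda_N\rho^{p-1}+\mu\quad\text{a.e.},
\end{equation*}
with equality on $\{\rho>0\}$, for a Lagrange multiplier $\mu$.

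The second step extracts compact support from this relation. On $\supp\rho$ we have $p\Lambda_N\rho^{s/d}=U_\rho-u-\mu$. As $|x|\to\infty$, $U_\rho(x)\to0$, so at far points of the support we need $-u(x)-\mu\gtrsim0$. The key is the asymptotics of the Kantorovich potential at infinity. A particle placed far away interacts only weakly with the others, so $u(x)\to u_\infty$, where $u_\infty$ is determined by the $(N-1)$-particle problem. The refined statement needed is
\begin{equation*}
U_\rho(x)-u(x)+u_\infty\le -c|x|^{-\s}\quad\text{for large }|x|,
\end{equation*}
or a comparable sign statement. The multiplier $\mu$ is identified through this limit. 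One then obtains $\rho^{s/d}\le -c|x|^{-\s}+o$, which is negative for large $|x|$, forcing $\rho=0$ there. The mechanism is a screening effect: the remaining $N-1$ electrons, of total charge $N-1$, fail to fully screen the far particle, whereas the Hartree potential sees total charge $N$.

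The main obstacle is the quantitative comparison of $u$ with $U_\rho$ at infinity. It requires potential-theoretic control of the optimal plan for configurations with one coordinate far away, namely that the conditional law of the others is roughly $\rho$ minus a unit mass. This is precisely the content of \cite{DiMarinoLelotte2026} and \cite{Lelotte2024,Lelotte2025}, so in the paper we simply cite it.
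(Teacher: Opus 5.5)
Your operative argument is to cite Di Marino--Lelotte after checking that their Hartree normalization, their cost $C(\rho)$, and their fixed-$N$ constant coincide with $D$, $\Wcal_N$, and $\Lambda_N$. This is exactly what the paper does: the proposition is stated as a citation, and the only thing recorded is this dictionary. The orientation sketch is not needed, but as written it contains two errors that you should not carry forward.

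First, $\Wcal_N$ is convex, not concave. Mixing optimal plans for $\rho_0$ and $\rho_1$ gives an admissible plan for any convex combination, and weak duality gives $\Wcal_N(\rho')\geq\int u\rho'$ for every admissible potential $u$. A Kantorovich potential at $\rho$ is therefore a subgradient. It only yields $\Gcal_N(\rho+t\phi)\leq\Gcal_N(\rho)+t\int(U_\rho-u)\phi+\frac{t^2}{2}D(\phi)$, which is the wrong direction for exploiting maximality. The Euler--Lagrange inequality instead needs an upper bound on $\Wcal_N(\rho+t\phi)-\Wcal_N(\rho)$. That bound comes from potentials of the perturbed densities together with a compactness and selection argument.

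Second, the screening has the opposite sign. In the Kantorovich potential the far particle sees only the other $N-1$ particles, so $u(x)-u_\infty\leq(N-1+o(1))|x|^{-\s}$, whereas $U_\rho(x)\geq(N-o(1))|x|^{-\s}$. Hence $U_\rho-u+u_\infty\geq(1-o(1))|x|^{-\s}>0$. At $N=1$, where $u\equiv u_\infty=0$, your displayed bound would read $U_\rho\leq-c|x|^{-\s}$, which is absurd.

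The correct mechanism, as in the Lane--Emden case and the formal argument of Seidl, Vuckovic, and Gori-Giorgi, uses this positive tail to exclude $\mu\leq-u_\infty$. Otherwise the Euler--Lagrange relation puts every far point in the support with $\rho\gtrsim|x|^{-\d}$, contradicting $\rho\in L^1$. Hence $\mu+u_\infty>0$, and compact support follows because $U_\rho-u+u_\infty\to0$ at infinity. So the multiplier is not identified with $-u_\infty$; it is strictly separated from it.
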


In the notation of \cite{DiMarinoLelotte2026}, their $C(\rho)$ is our $\Wcal_N(\rho)$ and their sharp fixed-$N$ constant $c_{\mathrm{LO}}(\s,\d,N)$ equals $\Lambda_N(\s,\d)$.  Thus, a minimiser of the sharp fixed-$N$ Lieb--Oxford bound in their terminology is precisely an optimizing density in ours.

The construction below separates two roles of the added particle.  A small fraction of its one-body density is placed in a fixed compact region outside $K$, and the corresponding added-particle position is coupled non-independently with $X$.  This produces a first-order reduction of the actual cross-interaction relative to the Hartree cross term.  The remaining mass is spread on a much larger scale, where its Riesz and $L^p$ costs are negligible.  Thus, an order-$\varepsilon$ interaction gain dominates an order-$\varepsilon^p$ local-density cost.

\subsection{Exterior uniqueness for finite atomic measures}

The non-product coupling requires that distinct finite atomic measures have distinct exterior Riesz potentials.  For $\d\geq2$, this is a real-analytic continuation statement.  In dimension one, where removing finitely many points disconnects the line, the corresponding statement follows from the exterior moment expansion.

\begin{lemma}[Exterior uniqueness]\label{lem:exterior-injectivity}
Let
\begin{equation}
  \mu=\sum_{j=1}^M a_j\delta_{z_j}
\end{equation}
be a finite signed atomic measure on $\R^\d$, where the $z_j$ are distinct.  If
\begin{equation}\label{eq:atomic-riesz-potential}
  U_\mu(y):=\sum_{j=1}^M\frac{a_j}{|y-z_j|^\s}
\end{equation}
vanishes on a nonempty open subset of an unbounded connected component of $\R^\d\setminus\supp\mu$, then $\mu=0$.
\end{lemma}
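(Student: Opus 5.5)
The plan is to first spread the vanishing of $U_\mu$ over the whole unbounded component by analyticity, and then read off $\mu=0$ from the large-$|y|$ behaviour of $U_\mu$.

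\emph{Continuation.} On $\R^\d\setminus\supp\mu=\R^\d\setminus\{z_1,\ldots,z_M\}$ each function $y\mapsto|y-z_j|^{-\s}=(|y-z_j|^2)^{-\s/2}$ is real-analytic, because $|y-z_j|^2$ is a positive polynomial there. Hence $U_\mu$ is real-analytic on this open set. Let $\Omega$ be the unbounded connected component in question. Since $U_\mu$ vanishes on a nonempty open subset of $\Omega$, the identity theorem for real-analytic functions gives $U_\mu\equiv0$ on all of $\Omega$.

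\emph{Structure of $\Omega$.} For $\d\geq2$, the set $\Omega$ is the whole complement of the finite set $\{z_j\}$, which is connected. For $\d=1$, $\Omega$ contains a half-line $(L,\infty)$ or $(-\infty,-L)$ with $L>\max_j|z_j|$. In either case $U_\mu$ vanishes along a ray $y=t\omega$, $t\to\infty$, for some unit vector $\omega$ (in $\d=1$, $\omega=\pm1$). It therefore suffices to show that vanishing along such a ray forces all $a_j=0$.

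\emph{Exterior expansion.} For $|y|$ large, write $|y-z|^{-\s}=|y|^{-\s}\bigl(1-2y\cdot z/|y|^2+|z|^2/|y|^2\bigr)^{-\s/2}$ and expand in powers of $1/t$ along $y=t\omega$. This gives
\[
U_\mu(t\omega)=t^{-\s}\sum_{n\geq0}c_n(\omega)t^{-n},
\]
where $c_n(\omega)=\sum_j a_j\,G_n(\omega\cdot z_j,|z_j|^2)$ with Gegenbauer-type polynomials $G_n$. This route would require showing that the vanishing of all these moment combinations, for a single direction $\omega$, forces $a_j=0$. That is awkward when $\d\geq2$ and several $z_j$ share the same projection onto $\omega$.

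\emph{Main obstacle and the simpler alternative.} The difficulty above is where I expect the argument to be delicate, and I would avoid it as follows. For $\d\geq2$, use connectedness instead: $U_\mu\equiv0$ on $\R^\d\setminus\{z_j\}$. Letting $y\to z_1$, the term $a_1|y-z_1|^{-\s}$ blows up while the remaining terms stay bounded, so $a_1=0$; repeat for each $j$. In $\d=1$, order the points so that $z_1>\cdots>z_M$ and use the half-line $(L,\infty)$; the case $(-\infty,-L)$ is symmetric. On the domain $t>z_1$, which is connected and contains $(L,\infty)$, the identity theorem gives $U_\mu\equiv0$. Letting $t\downarrow z_1$ shows $a_1=0$. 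Then $U_\mu=\sum_{j\geq2}a_j|t-z_j|^{-\s}$ is real-analytic on $(z_2,\infty)$ and vanishes on $(z_1,\infty)$, so by the identity theorem it vanishes on $(z_2,\infty)$; letting $t\downarrow z_2$ gives $a_2=0$. Inducting on $j$ yields $\mu=0$, and this argument avoids the moment expansion entirely.
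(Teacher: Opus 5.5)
Your proof is correct. For $\d\geq2$ it is the paper's argument: $U_\mu$ is real-analytic on the connected set $\R^\d\setminus\{z_1,\ldots,z_M\}$, so it vanishes there, and the blow-up of $a_j|y-z_j|^{-\s}$ at each atom forces $a_j=0$. The paper cites Mityagin's measure-zero theorem for the second step, but the ordinary identity theorem you use is enough.

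For $\d=1$ you take a different route. The paper continues the zero set to a neighbourhood of $+\infty$ and expands $y^{\s}U_\mu(y)=\sum_n\frac{(\s)_n}{n!}y^{-n}\sum_j a_jz_j^n$ as a convergent binomial series. It concludes that every moment $\sum_j a_jz_j^n$ vanishes and then inverts a Vandermonde matrix. This is exactly the exterior-expansion route you set aside. The obstruction you anticipated, several atoms sharing a projection, does not arise in one dimension, because the $z_j$ themselves are distinct.

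Your peeling argument instead continues the zero set leftward. Vanishing on $(z_1,\infty)$ kills $a_1$ through the pole at $z_1$. Once that pole has zero coefficient, the identity theorem extends the vanishing to $(z_2,\infty)$, and so on down the ordered atoms. This avoids series and linear algebra and reuses the same pole mechanism as the higher-dimensional case, at the cost of an induction over the atoms. The paper's version, by contrast, uses only the behaviour of $U_\mu$ near infinity and never approaches the atoms.
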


\begin{proof}
Assume first that $\d\geq2$.  Each summand is real analytic away from its pole, being the composition of the polynomial $y\mapsto |y-z_j|^2$ with the real-analytic function $t\mapsto t^{-\s/2}$ on $(0,\infty)$; see \cite[Sections~1.4 and~2.2]{KrantzParks2002}.  Hence, $U_\mu$ is real analytic on
\begin{equation}
  \Omega:=\R^\d\setminus\{z_1,\ldots,z_M\}.
\end{equation}
Since $\Omega$ is open and connected and $U_\mu$ is real analytic on $\Omega$, \cite[Proposition~1]{Mityagin2020} implies that either $U_\mu$ vanishes identically or its zero set has Lebesgue measure zero.  The latter alternative is impossible because $U_\mu$ vanishes on a nonempty open subset of $\Omega$, which has positive Lebesgue measure.  Hence, $U_\mu$ vanishes throughout $\Omega$.  For every $j$,
\begin{equation}
  a_j=\lim_{y\to z_j}|y-z_j|^sU_\mu(y)=0,
\end{equation}
because all terms with index different from $j$ remain bounded near $z_j$.

Now let $\d=1$.  After reflection if necessary, the open set lies in the component to the right of the atoms.  Real-analytic continuation on that component shows that $U_\mu(y)=0$ for every sufficiently large $y$.  Choose $R>\max_j|z_j|$.  For $y>R$, the binomial series converges absolutely and gives
\begin{equation}\label{eq:moment-expansion}
  0=y^sU_\mu(y)=\sum_{j=1}^M a_j\left(1-\frac{z_j}{y}\right)^{-\s}=\sum_{n=0}^\infty\frac{(\s)_n}{n!}\,y^{-n}\sum_{j=1}^M a_jz_j^n,
\end{equation}
where $(\s)_n=\s(\s+1)\cdots(\s+n-1)$ and $(\s)_0=1$.  The power series in $y^{-1}$ vanishes on an interval, hence all of its coefficients vanish:
\begin{equation}
  \sum_{j=1}^M a_jz_j^n=0
  \qquad(n\geq0).
\end{equation}
Taking $n=0,\ldots,M-1$ and using the invertibility of the Vandermonde matrix associated with the distinct points $z_j$ gives $a_1=\cdots=a_M=0$.
\end{proof}

\subsection{A non-product coupling with lower cross-interaction}

Let $\rho\in\Acal_N$ be compactly supported and let $P$ be an optimal plan.  Choose a compact set $K\subset\R^\d$ with $\supp\rho\subset K$.  View $X=(x_1,\ldots,x_N)$ as the canonical coordinate random vector with law $P$.  Since $\rho_P=\rho$,
\begin{equation}
  0=\rho(K^c)=\sum_{i=1}^N P(x_i\notin K),
\end{equation}
we have $P(K^N)=1$.  For $X\in K^N$ and $y\notin K$, define
\begin{equation}\label{eq:WXY}
  W(X,y):=\sum_{i=1}^N\frac1{|x_i-y|^\s}.
\end{equation}

For an exterior probability density $\nu$ supported in $K^c$, consider the auxiliary two-marginal optimal-transport problem
\begin{equation}
  \inf\left\{
    \int_{K^N\times K^c}W(X,y)\diff\pi(X,y):
    \text{$\pi$ has marginals $P$ and $\nu$}
  \right\}.
\end{equation}
This problem is distinct from the multimarginal problem defining $\Wcal_N(\rho)$: the law $P$ of the original $N$-particle configuration $X$ is fixed, and only the joint law of $X$ and the added particle $y$ is varied.  The product coupling $P\otimes\nu$ makes $X$ and $y$ independent, and its cost is
\begin{equation}
  \int_{K^N\times K^c}W\diff(P\otimes\nu)=D(\rho,\nu).
\end{equation}

If $P\otimes\nu$ were optimal, its support $\supp P\times\supp\nu$ would be $W$-cyclically monotone.\footnote{For a minimization cost $c$, a set $\Gamma$ is $c$-cyclically monotone if, for every $m\geq2$, every $(X_j,y_j)\in\Gamma$, $1\leq j\leq m$, and every permutation $\sigma$ of $\{1,\ldots,m\}$, one has $\sum_{j=1}^m c(X_j,y_j)\leq\sum_{j=1}^m c(X_j,y_{\sigma(j)})$.  The case $m=2$ is the two-cycle condition; see \cite{SmithKnott1992,GangboMcCann1996,Ruschendorf1996}.}  Hence, for any two configurations $X,X'\in\supp P$ and any two exterior points $y,y'\in\supp\nu$, applying the two-cycle condition first to $(X,y),(X',y')$ and then to $(X,y'),(X',y)$ gives
\begin{equation}
  W(X,y)+W(X',y')=W(X,y')+W(X',y).
\end{equation}
Equivalently, $W$ would have the form $a(X)+b(y)$ on this product support.  The proof below chooses $\nu$ so that this degeneracy fails and constructs a cheaper coupling explicitly.

Namely, it perturbs the product coupling itself by setting
\begin{equation}
  \diff\pi_t(X,y)
  =\bigl(1-tf(X)g(y)\bigr)\diff P(X)\nu(y)\diff y,
\end{equation}
where $f$ and $g$ are bounded and have zero mean with respect to $P$ and $\nu$, respectively.  The zero-mean conditions preserve both marginals, while the choice made below lowers the cross-interaction strictly, yielding
\begin{equation}
  \int_{K^N\times K^c}W\diff\pi_t=D(\rho,\nu)-\delta
  \qquad\text{for some }\delta>0.
\end{equation}
This is the quantitative defect used in the subsequent Lieb--Oxford quotient comparison.

\begin{lemma}[Lower cross-interaction]\label{lem:strict-coupling}
Assume $\rho\in L^p$ is nonzero.  There exist
\begin{itemize}
\item a smooth compactly supported probability density $\nu$ on $\R^\d$ with $\supp\nu\cap K=\varnothing$,
\item a coupling $\pi$ of $P$ and $\nu$,
\item a number $\delta>0$,
\end{itemize}
such that
\begin{equation}\label{eq:strict-coupling}
  \int_{(\R^\d)^N\times\R^\d} W(X,y)\diff\pi(X,y)
  =D(\rho,\nu)-\delta.
\end{equation}
\end{lemma}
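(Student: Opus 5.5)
We will follow the perturbation scheme sketched just before the statement, choosing $\pi_t=(1-tf(X)g(y))\,P\otimes\nu$ for suitable bounded mean-zero $f,g$ and small $t>0$. Since $\supp\nu$ will be a compact set disjoint from $K$, $W$ is bounded and continuous on $K^N\times\supp\nu$. Hence
\begin{equation}
  \int W\diff\pi_t=D(\rho,\nu)-t\iint W(X,y)f(X)g(y)\diff P(X)\,\nu(y)\diff y .
\end{equation}
Provided $|f|,|g|\le1$ and $0<t<1$, the density $1-tfg$ is nonnegative, and the zero-mean conditions preserve both marginals, so $\pi_t$ is a genuine coupling. It therefore suffices to find $\nu,f,g$ with $I:=\iint Wfg\diff P\,\nu\diff y>0$, and then to set $\delta=tI$.

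\textbf{Choosing $\nu$.} Take $\nu$ to be a smooth probability density supported in a small ball $B\subset\R^\d\setminus K$. We choose $B$ inside the unbounded component of $\R^\d\setminus K$, enlarging $K$ to a closed ball if convenient, so that every atom of every configuration lies in the bounded region.

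\textbf{Reducing to a nondegeneracy statement.} Suppose that $I=0$ for every admissible choice of $f,g$. Since $f$ ranges over all bounded $P$-mean-zero functions and $g$ over all bounded $\nu$-mean-zero functions, this forces the doubly centered kernel
\begin{equation}
  W(X,y)-\int W(X',y)\diff P(X')-\int W(X,y')\nu(y')\diff y'+\iint W\diff P\,\nu
\end{equation}
to vanish for $P\otimes\nu$-a.e.\ $(X,y)$. By continuity in $y$ and the fact that $\nu>0$ on an open set, we may take $\nu$ positive on a small ball $B'\subset B$. Then for $P$-a.e.\ $X$ we have $W(X,y)=a(X)+b(y)$ for all $y\in B'$, where $b(y)=U_\rho(y)-\mathrm{const}$.

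Now fix two configurations $X,X'$ in a full-measure set. On $B'$ the function $W(X,\cdot)-W(X',\cdot)$ is then constant, namely $a(X)-a(X')$. We need this constant to be zero before we can invoke \Cref{lem:exterior-injectivity}. For this, use the decay $W(X,y)\to0$ as $|y|\to\infty$: by real-analytic continuation on the unbounded component of $\R^\d\setminus\bigcup_i\{x_i,x'_i\}$ (for $\d\ge2$ this is just the complement of finitely many points; for $\d=1$, use the right component, which contains $B'$), the identity $W(X,y)-W(X',y)\equiv\mathrm{const}$ extends to the whole component. Letting $|y|\to\infty$ shows the constant is $0$.

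Applying \Cref{lem:exterior-injectivity} to the signed atomic measure $\sum_i\delta_{x_i}-\sum_i\delta_{x'_i}$, after merging coincident points, shows that the empirical measures of $X$ and $X'$ coincide. So for $P$-a.e.\ $X$ the empirical measure $\sum_i\delta_{x_i}$ equals a single fixed atomic measure $\mu_0$. But then $\rho=\rho_P=\mu_0$ is atomic, which contradicts $\rho$ being a nonzero $L^p$ density. Hence some bounded mean-zero pair gives $I\neq0$, and replacing $f$ by $-f$ makes $I>0$.

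Finally, with $f,g$ normalized to $\|f\|_\infty,\|g\|_\infty\le1$ and any $t\in(0,1)$, we set $\pi=\pi_t$ and $\delta=tI$, which gives \eqref{eq:strict-coupling}.

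\textbf{Main obstacle.} The delicate step is passing from ``$I=0$ for all $f,g$'' to the pointwise identity $W(X,\cdot)-W(X',\cdot)\equiv\mathrm{const}$ on an open set for almost every pair $(X,X')$. This needs care with null sets, which we handle by using continuity in $y$ and a countable dense family of test functions $g$. We must also ensure that the open set lies in an unbounded component, which is the role of the choice of $\nu$ above. An alternative route is to argue via the two-cycle condition using $\supp P$ directly: points of $\supp P$ are limits of configurations with finitely many atoms in $K$, and $W$ is continuous there.
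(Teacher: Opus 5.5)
Your proof is correct. It shares the paper's skeleton: the zero-marginal perturbation $(1-tfg)\,P\otimes\nu$, the fact that $\mu_X$ cannot be $P$-a.s.\ constant because $\rho\in L^p$, \Cref{lem:exterior-injectivity}, and decay of $W(X,y)$ as $|y|\to\infty$. The difference is the order in which you localize the nondegeneracy.

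\textbf{The paper's order.} It first works on the whole unbounded component $U$ of $\R^\d\setminus K$. Using a countable dense subset of $U$ and continuity, it finds two points $y_0,y_1\in U$ for which $H=W(\cdot,y_0)-W(\cdot,y_1)$ is not $P$-a.s.\ constant. Because the contradictory identity then holds on all of $U$, decay kills the constant directly, with no continuation needed. Only afterwards does it build $\nu=\frac12(\nu_0+\nu_1)$ from two small bumps, with explicit choices $g=\pm1$, $f=\overline H-\int\overline H\diff P$, and $\delta=t\Var_P(\overline H)$. The price is an approximation step that keeps $\overline H$ uniformly close to $\frac12H$.

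\textbf{Your order.} You fix a single arbitrary bump $\nu$ first. You argue by contradiction that vanishing of every $I(f,g)$ forces the doubly centered kernel to vanish $P\otimes\nu$-a.e.; this uses the standard fact that rectangles determine a bounded function, which you assert rather than prove, but it is routine. That gives the separation $W(X,y)=a(X)+b(y)$ only on a small ball $B'$. You then need real-analytic continuation of $W(X,\cdot)-W(X',\cdot)$ from $B'$ to an unbounded component before decay can identify the constant as zero.

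\textbf{What each buys.} Your route is non-constructive but slightly stronger: every exterior bump of the kind you chose already admits a strictly cheaper coupling, so no special placement of $\nu$ is needed. The paper's route produces explicit $f$, $g$, and $\delta$, and uses analyticity only through \Cref{lem:exterior-injectivity}.
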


\begin{proof}
It suffices to find an exterior observable whose interaction with the random $N$-particle configuration is nonconstant, and then perturb the product coupling in a bounded zero-marginal direction.

\smallskip\noindent\emph{Step 1: the exterior interaction is not deterministic.}
For $X\in K^N$, let
\begin{equation}
  \mu_X:=\sum_{i=1}^N\delta_{x_i}
\end{equation}
be its counting measure, which has total mass $N$.  The random measure $\mu_X$ cannot be $P$-almost surely constant, because otherwise
\begin{equation}
  \rho=\int_{(\R^\d)^N} \mu_X\diff P(X)
\end{equation}
would be a finite atomic measure, contradicting $\rho\in L^p$.

Let $U$ be an unbounded connected component of $\R^\d\setminus K$.  We claim that there exist $y_0,y_1\in U$ for which
\begin{equation}\label{eq:H-nonconstant}
  H(X):=W(X,y_0)-W(X,y_1)
\end{equation}
is not $P$-almost surely constant.  Otherwise, we may fix $y_*\in U$ and a countable dense subset $\mathcal D\subset U$ such that, for every $y\in\mathcal D$, there exist $c_y\in\R$ and a $P$-full set $\mathcal X_y\subset K^N$ satisfying
\begin{equation}
  W(X,y_*)-W(X,y)=c_y\qquad(X\in\mathcal X_y).
\end{equation}
Since $\mathcal D$ is countable, the set
\begin{equation}
  \mathcal X_0:=\bigcap_{y\in\mathcal D}\mathcal X_y
\end{equation}
is $P$-full.  Hence, for $X,X'\in\mathcal X_0$ and $y\in\mathcal D$, applying the preceding identity to $X$ and $X'$ gives
\begin{equation}
  W(X,y)-W(X',y)=W(X,y_*)-W(X',y_*).
\end{equation}
Thus, the left-hand side is independent of $y\in\mathcal D$ and, by continuity, of $y\in U$.  Since $U$ is unbounded and both $W(X,y)$ and $W(X',y)$ tend to zero as $|y|\to\infty$ inside $U$, this difference vanishes throughout $U$.  Applying \Cref{lem:exterior-injectivity} to the finite signed atomic measure $\mu_X-\mu_{X'}$ gives $\mu_X=\mu_{X'}$.  Thus, $\mu_X$ would be almost surely constant, a contradiction.

\smallskip\noindent\emph{Step 2: a zero-marginal perturbation lowers the cost.}
Replace the point masses at $y_0,y_1$ by smooth probability densities $\nu_0,\nu_1$ supported in sufficiently small disjoint balls contained in $U$, and set
\begin{equation}
  \nu:=\frac12(\nu_0+\nu_1).
\end{equation}
Let $g=1$ on $\supp\nu_0$ and $g=-1$ on $\supp\nu_1$.  Then $\int_{\R^\d} g\diff\nu=0$.  Define the bounded function
\begin{equation}
  \overline H(X):=\int_{\R^\d} g(y)W(X,y)\nu(y)\diff y
\end{equation}
Since $H$ is nonconstant, $\operatorname{dist}_{L^2(P)}(H,\mathbb R)=\sqrt{\Var_P(H)}>0$.  By uniform continuity of $W$ on $K^N$ times a compact exterior neighborhood of $\{y_0,y_1\}$, the balls may be chosen so small that $\norm{\overline H-\frac12H}_{L^\infty(P)}<\frac14\sqrt{\Var_P(H)}$.  It follows that $\operatorname{dist}_{L^2(P)}(\overline H,\mathbb R)\geq\frac14\sqrt{\Var_P(H)}>0$, so $\overline H$ remains nonconstant in $L^2(P)$.  Set
\begin{equation}
  f(X):=\overline H(X)-\int_{(\R^\d)^N}\overline H\diff P.
\end{equation}
Then $\int_{(\R^\d)^N} f\diff P=0$ and
\begin{equation}\label{eq:variance-covariance}
  \iint_{(\R^\d)^N\times\R^\d} f(X)g(y)W(X,y)\diff P(X)\nu(y)\diff y
  =\Var_P(\overline H)>0.
\end{equation}
For sufficiently small $t>0$, define
\begin{equation}
  \diff\pi(X,y)
  :=\bigl(1-tf(X)g(y)\bigr)\diff P(X)\nu(y)\diff y.
\end{equation}
The density is nonnegative.  The two zero-mean identities show that the marginals of $\pi$ are $P$ and $\nu$.  Its cost is
\begin{equation}
  \int_{(\R^\d)^N\times\R^\d} W\diff\pi
  =D(\rho,\nu)-t\Var_P(\overline H).
\end{equation}
Taking $\delta=t\Var_P(\overline H)$ yields the desired conclusion.
\end{proof}

\subsection{Strict extension by one particle}

In the $N$-to-$(N+1)$ case of the diffuse-particle construction in \Cref{prop:monotonicity}, the added particle is independent of the original $N$-particle configuration.  The proof below replaces an $\varepsilon$-weighted part of that product coupling by the non-product coupling from \Cref{lem:strict-coupling}.

\begin{proposition}[Strict extension]\label{prop:strict-extension}
If $\Lambda_N$ is attained, then
\begin{equation}\label{eq:strict-next}
  \Lambda_{N+1}>\Lambda_N.
\end{equation}
\end{proposition}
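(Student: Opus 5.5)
Let $\rho\in\Acal_N$ attain $\Lambda_N$. By scaling we normalize $\Lcal(\rho)=1$, so $\Gcal_N(\rho)=\Lambda_N>0$. Let $P$ be an optimal plan from \Cref{lem:OT-existence}. By \Cref{prop:DML}, $\rho$ is compactly supported. Fix a compact set $K\supset\supp\rho$, so that $P(K^N)=1$. \Cref{lem:strict-coupling} then supplies an exterior density $\nu$ supported in a fixed compact set disjoint from $K$, a coupling $\pi$ of $P$ and $\nu$, and a defect $\delta>0$ with $\int W\diff\pi=D(\rho,\nu)-\delta$.

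The trial $(N+1)$-particle state is built from two pieces, with $\varepsilon\in(0,1)$ and a large scale $S_\varepsilon$ (say $S_\varepsilon=\varepsilon^{-2/\s}$, as in the proof of \Cref{prop:strict-completion}). The first piece is $\varepsilon\,\pi$, viewed as a law of $(X,y)\in(\R^\d)^{N+1}$. The second is $(1-\varepsilon)\,P\otimes\eta_{S_\varepsilon}$, where $\eta_{S}$ is a smooth probability density dilated to scale $S$, for instance $\nu_S$ built from an annular profile as in \eqref{eq:nuR}. We symmetrize the sum over the $N+1$ labels. Both pieces have first-$N$ marginal $P$, so the one-body density is
\[
\rho'=\rho+\varepsilon\nu+(1-\varepsilon)\eta_{S_\varepsilon}\in\Acal_{N+1}.
\]
The pair cost equals $\Wcal_N(\rho)$, plus the cross cost $\varepsilon(D(\rho,\nu)-\delta)+(1-\varepsilon)D(\rho,\eta_{S_\varepsilon})$, with no added-added term since only one particle is added. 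The Hartree term expands to
\[
\tfrac12D(\rho)+D(\rho,\varepsilon\nu+(1-\varepsilon)\eta_{S_\varepsilon})+\tfrac12D(\varepsilon\nu+(1-\varepsilon)\eta_{S_\varepsilon}).
\]
Subtracting, the cross terms cancel except for the defect, and since $D\ge0$,
\[
\Gcal_{N+1}(\rho')\ \ge\ \Lambda_N+\varepsilon\delta .
\]

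For the denominator, the supports of $\rho$, $\nu$, and $\eta_{S_\varepsilon}$ are pairwise disjoint once $S_\varepsilon$ exceeds the size of $K\cup\supp\nu$. This is the key use of compact support: there is no first-order overlap term $p\int\rho^{p-1}\nu$. Therefore
\[
\Lcal(\rho')=1+\varepsilon^p\Lcal(\nu)+(1-\varepsilon)^pS_\varepsilon^{-\s}\Lcal(\eta)=1+O(\varepsilon^p)+O(\varepsilon^2).
\]
Consequently
\[
\frac{\Gcal_{N+1}(\rho')}{\Lcal(\rho')}\ \ge\ \frac{\Lambda_N+\varepsilon\delta}{1+O(\varepsilon^p+\varepsilon^2)}\ =\ \Lambda_N+\varepsilon\delta-O(\varepsilon^{p}+\varepsilon^2),
\]
which exceeds $\Lambda_N$ for small $\varepsilon$ because $p>1$. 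This gives $\Lambda_{N+1}>\Lambda_N$.

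The main obstacle is checking that the cost of the symmetrized mixture is exactly as claimed. One must confirm that symmetrization preserves the pair cost and the one-body density, and that the $N$-$N$ interaction contributes exactly $\Wcal_N(\rho)$ in both pieces, which holds because each piece has $X$-marginal $P$. One must also confirm that $D(\rho,\eta_S)$ and the other terms are finite; this holds because $\eta_S$ and $\nu$ are smooth with bounded potentials. The genuinely nontrivial input, the strict defect $\delta$, is already provided by \Cref{lem:strict-coupling}, and disjoint supports make the $L^p$ bookkeeping exact rather than merely asymptotic.
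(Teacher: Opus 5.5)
Your proposal is correct and follows the paper's proof essentially step for step. It uses the same symmetrized mixture $\Sym_{N+1}\bigl[\varepsilon\pi+(1-\varepsilon)P\otimes\eta_{S_\varepsilon}\bigr]$ with $S_\varepsilon=\varepsilon^{-2/\s}$, the same exact cost bookkeeping giving $\Gcal_{N+1}(\rho')\geq\Gcal_N(\rho)+\varepsilon\delta$, and the same disjoint-support computation $\Lcal(\rho')=\Lcal(\rho)+O(\varepsilon^p)+O(\varepsilon^2)$.

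The differences are cosmetic. The paper translates a dilated bump to $b_R=3ARe$ where you use an origin-centered annular profile, and it keeps $\Gcal_N(\rho)=\Lambda_N\Lcal(\rho)$ rather than normalizing $\Lcal(\rho)=1$. You should rename your annular profile, since $\nu$ already denotes the exterior density from \Cref{lem:strict-coupling}.
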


\begin{proof}
Let $\rho$ attain $\Lambda_N$, and choose an optimal plan $P$.  By \Cref{prop:DML}, $\rho$ is compactly supported.  Apply \Cref{lem:strict-coupling} to obtain $\nu$, $\pi$, and $\delta>0$.

Choose a smooth compactly supported probability density $\zeta$ on $\R^\d$, and choose $A>0$ so that $\supp\zeta\subset B_A$.  Fix a unit vector $e$ and, for $R\gg1$, set
\begin{equation}
  b_R:=3AR e,
  \qquad
  \zeta_R(x):=R^{-\d}\zeta\left(\frac{x-b_R}{R}\right).
\end{equation}
The support of $\zeta_R$ lies in $B_{AR}(b_R)$ and is therefore disjoint from both $\supp\rho$ and $\supp\nu$ for all sufficiently large $R$.  By translation and scaling invariance,
\begin{equation}\label{eq:zeta-scaling}
  D(\zeta_R)=R^{-\s}D(\zeta),
  \qquad
  \Lcal(\zeta_R)=R^{-\s}\Lcal(\zeta).
\end{equation}
For $0<\varepsilon<1$, choose $R_\varepsilon=\varepsilon^{-2/\s}$ and set
\begin{equation}\label{eq:added-density}
  \eta_\varepsilon
  :=\varepsilon\nu+(1-\varepsilon)\zeta_{R_\varepsilon}.
\end{equation}
Identifying $(\R^\d)^N\times\R^\d$ with $(\R^\d)^{N+1}$, define the symmetric probability measure
\begin{equation}\label{eq:distinguished-plan}
  Q_\varepsilon
  :=\Sym_{N+1}\!\left[
  \varepsilon\pi
  +(1-\varepsilon)(P\otimes\zeta_{R_\varepsilon})
  \right].
\end{equation}
Symmetrization preserves the sum of the one-body marginals, so
\begin{equation}\label{eq:Q-density}
  \rho_{Q_\varepsilon}=\rho+\eta_\varepsilon.
\end{equation}
Before symmetrization, both terms in the convex combination in \eqref{eq:distinguished-plan} have $P$ as their marginal on the first $N$ coordinates.  Decomposing the pair cost into the interactions within those coordinates and the interactions with the last coordinate, and using the optimality of $P$, \eqref{eq:strict-coupling}, and invariance of the total pair cost under symmetrization, we obtain
\begin{equation}\label{eq:Q-pair-cost}
  \int_{(\R^\d)^{N+1}}\sum_{1\leq i<j\leq N+1}\frac1{|x_i-x_j|^\s}\diff Q_\varepsilon
  =\Wcal_N(\rho)
  +\varepsilon\bigl(D(\rho,\nu)-\delta\bigr)
  +(1-\varepsilon)D(\rho,\zeta_{R_\varepsilon}).
\end{equation}
Since
\begin{equation}
  D(\rho,\eta_\varepsilon)
  =\varepsilon D(\rho,\nu)
  +(1-\varepsilon)D(\rho,\zeta_{R_\varepsilon}),
\end{equation}
we obtain the identity
\begin{equation}\label{eq:strict-extension-identity}
  \frac12D(\rho+\eta_\varepsilon)
  -\int_{(\R^\d)^{N+1}}\sum_{1\leq i<j\leq N+1}\frac1{|x_i-x_j|^\s}\diff Q_\varepsilon
  =\Gcal_N(\rho)+\varepsilon\delta+\frac12D(\eta_\varepsilon).
\end{equation}
Since $Q_\varepsilon$ is admissible for $\rho+\eta_\varepsilon$ and $D(\eta_\varepsilon)\geq0$, it follows that
\begin{equation}\label{eq:strict-extension-gain}
  \Gcal_{N+1}(\rho+\eta_\varepsilon)
  \geq\Gcal_N(\rho)+\varepsilon\delta.
\end{equation}

Since $R_\varepsilon\to\infty$ as $\varepsilon\downarrow0$, the supports of $\rho$, $\nu$, and $\zeta_{R_\varepsilon}$ are pairwise disjoint for all sufficiently small $\varepsilon>0$.  Hence, by \eqref{eq:added-density}, our choice $R_\varepsilon=\varepsilon^{-2/\s}$, and recalling that $p=1+\frac\s\d$,
\begin{equation}\label{eq:strict-extension-L}
  \Lcal(\rho+\eta_\varepsilon)=\Lcal(\rho)+\varepsilon^p\Lcal(\nu)+(1-\varepsilon)^p\Lcal(\zeta_{R_\varepsilon})=\Lcal(\rho)+O(\varepsilon^p)+O(\varepsilon^2).
\end{equation}
Since $\Gcal_N(\rho)=\Lambda_N\Lcal(\rho)$, \eqref{eq:strict-extension-gain} and \eqref{eq:strict-extension-L} imply
\begin{equation}
  \frac{\Gcal_{N+1}(\rho+\eta_\varepsilon)}
  {\Lcal(\rho+\eta_\varepsilon)}
  >\Lambda_N
\end{equation}
for all sufficiently small $\varepsilon$.  Taking the supremum over $\mathcal{A}_{N+1}$ proves \eqref{eq:strict-next}.
\end{proof}

\section{Proof of the main theorem and consequences}\label{sec:conclusion}

\subsection{Proof of the main theorem}

\begin{proof}[Proof of \Cref{thm:main}]
For $N=1$, the strict lower-particle inequality in \Cref{prop:conditional-attainment} is vacuous, and hence $\Lambda_1$ is attained.

As our induction hypothesis, assume that $\Lambda_N$ is attained.  By \Cref{prop:strict-extension},
\begin{equation}
  \Lambda_{N+1}>\Lambda_N.
\end{equation}
The Lieb--Oxford monotonicity in \Cref{prop:monotonicity} gives
\begin{equation}
  \Lambda_k\leq\Lambda_N<\Lambda_{N+1}
  \qquad(1\leq k\leq N).
\end{equation}
Thus, the strict lower-particle inequality holds at particle number $N+1$, and \Cref{prop:conditional-attainment} implies that $\Lambda_{N+1}$ is attained.  Induction proves attainment for every finite $N$ and all the strict inequalities in \eqref{eq:strict-sequence}.  Compact support follows from \Cref{prop:DML}.
\end{proof}

\subsection{Consequences and outlook}

Since $\cLO(\s,\d)=\sup_{N\geq1}\Lambda_N(\s,\d)$ and the sequence $N\mapsto\Lambda_N(\s,\d)$ is strictly increasing, the universal Riesz Lieb--Oxford constant is not realized at any finite particle number.  \Cref{thm:main} also yields at least one compactly supported optimizer at every finite particle number.  However, this existence result does not characterize the optimizers or quantify their behavior.

The present argument gives no uniqueness statement for the optimizing density, even modulo the natural translation, rotation, and dilation symmetries, and no information about its symmetry, support geometry, or regularity beyond membership in $L^1\cap L^p$ and compact support.  It also does not address the structure or uniqueness of an optimal multimarginal plan at an optimizing density, which is a separate question.  An Euler--Lagrange or dual characterization of optimizing densities may provide a route to these questions, but the compactness-and-comparison proof developed here produces no such condition.

The strict gap between successive fixed-particle-number constants is likewise qualitative.  Once attainment at particle number $N$ has been established, the extension step starts from a particular compactly supported optimizing density and an associated optimal $N$-particle plan.  The resulting decrease in the cross-interaction and the range of occupation parameters for which it dominates are not controlled uniformly over such choices.  Making this argument quantitative would require estimates uniform in $N$ and over a normalized family of optimizers, after fixing the translation and dilation freedoms.  Such estimates could also inform the large-$N$ behavior of optimizers, including whether one can choose a normalization and topology in which finite-$N$ optimizers have a nontrivial limit and whether any such limit is related to the infinite-volume uniform-electron-gas problem.

The proof also does not extend directly to the endpoint regimes.  As $\s\downarrow0$, the logarithmic kernel appears after the renormalization $(|x-y|^{-\s}-1)/\s\to-\log|x-y|$.  At the same time, the exponent $p=1+\s/\d$ tends to one.  For $\s>0$, the strict-completion construction produces an interaction gain of order $\varepsilon$, while the local-density cost of the added density is of order $\varepsilon^p=o(\varepsilon)$.  At $\s=0$, both contributions are of order $\varepsilon$, so this construction no longer forces a strict gain.  For the hypersingular Riesz exponents $\s\geq\d$, the kernel is no longer locally integrable, the Hartree term need not be finite on the present admissible class, and the Hardy--Littlewood--Sobolev estimates used here are unavailable.  Moreover, $p\geq2$.  The power-remainder estimate \eqref{eq:power-expansion} is valid in the form used here for $1<p\leq2$; for $p>2$, an additional mixed remainder of order $a^{p-2}b^2$ is unavoidable.  At the borderline $\s=\d$ (so $p=2$), the algebraic estimate still holds, but the preceding local-integrability and Hartree-energy obstructions already prevent the present argument.  Analogous existence or strict-monotonicity results in either regime would therefore require a reformulated variational problem and new compactness and comparison mechanisms.

\appendix
\crefalias{section}{appendix}
\section{Density profile extraction and Riesz--Hartree decoupling}\label{app:density-profiles}

For completeness, we give the translation-profile construction used in \Cref{sec:profiles}.  The argument is an $L^1\cap L^p$ version of the standard concentration--compactness/profile-decomposition iteration (cf. \cite{Lions1984I,Solimini1995,Gerard1998,Jaffard1999}).  Its role in the body of the paper is to provide countably many separated density profiles, a vanishing remainder, and an exact identity for the Riesz--Hartree energy.

\subsection{Continuity and vanishing for the Riesz--Hartree energy}

Set
\begin{equation}\label{eq:diagonal-exponents}
  r_0:=\frac{\d+\s}{2\s}>1,
  \qquad
  \gamma:=\frac{\s(\d-\s)}{\d+\s}>0.
\end{equation}
The choice of $r_0$ is characterized by
\begin{equation}
  1+\frac1{p'}=\frac1{r_0}+\frac1p.
\end{equation}

\begin{lemma}[Diagonal control]\label{lem:near-diagonal}
If $f\in L^p(\R^\d)$ is nonnegative, then
\begin{equation}\label{eq:near-diagonal}
  \iint_{|x-y|<\delta}
  \frac{f(x)f(y)}{|x-y|^\s}\diff x\diff y
  \leq C_{\s,\d}\delta^\gamma\norm{f}_p^2.
\end{equation}
\end{lemma}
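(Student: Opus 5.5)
Write the left-hand side as $\int_{\R^\d} f\,(f*k_\delta)$, where $k_\delta(z):=|z|^{-\s}\one_{\{|z|<\delta\}}$ is the truncated Riesz kernel. The idea is to apply H\"older's inequality first and then Young's convolution inequality. The exponent $r_0$ is exactly the one for which Young's inequality maps $L^p$ into $L^{p'}$.

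By H\"older's inequality,
\begin{equation*}
  \int_{\R^\d} f\,(f*k_\delta)\leq\norm{f}_p\norm{f*k_\delta}_{p'}.
\end{equation*}
The defining relation $1+\frac1{p'}=\frac1{r_0}+\frac1p$ is the hypothesis of Young's inequality, which gives
\begin{equation*}
  \norm{f*k_\delta}_{p'}\leq\norm{k_\delta}_{r_0}\norm{f}_p .
\end{equation*}
We need $1\leq r_0,p,p'\leq\infty$. This holds since $r_0>1$ and $1<p<2$.

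It remains to compute $\norm{k_\delta}_{r_0}$. In polar coordinates,
\begin{equation*}
  \norm{k_\delta}_{r_0}^{r_0}=|\mathbb S^{\d-1}|\int_0^\delta t^{\d-1-\s r_0}\diff t .
\end{equation*}
The exponent satisfies $\s r_0=(\d+\s)/2<\d$ because $\s<\d$. So the integral is finite and equals $C\delta^{\d-\s r_0}$, with $\d-\s r_0=(\d-\s)/2$. Hence
\begin{equation*}
  \norm{k_\delta}_{r_0}=C_{\s,\d}\,\delta^{(\d-\s)/(2r_0)}=C_{\s,\d}\,\delta^{\s(\d-\s)/(\d+\s)}=C_{\s,\d}\,\delta^\gamma .
\end{equation*}
Combining the three displays gives \eqref{eq:near-diagonal}.

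The only step needing care is this last exponent computation, together with checking that $k_\delta\in L^{r_0}$, which is where $\s<\d$ enters. (Local integrability of $k_\delta^{r_0}$ near the origin is exactly the condition $\s r_0<\d$.) For $\d=1$ the surface measure $|\mathbb S^{0}|$ is just $2$, so the argument is uniform in dimension.
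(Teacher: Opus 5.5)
Your proof is correct and follows essentially the paper's argument. You compute $\norm{k_\delta}_{r_0}=C_{\s,\d}\delta^\gamma$ using $\s r_0<\d$, apply Young's inequality from $L^p$ to $L^{p'}$ via the defining relation for $r_0$, and then pair with $f$ by H\"older's inequality.
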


\begin{proof}
Let
\begin{equation}
  K_\delta(x):=|x|^{-\s}\one_{|x|<\delta}.
\end{equation}
Since $sr_0=(\d+\s)/2<\d$,
\begin{equation}
  \norm{K_\delta}_{r_0}
  =C_{\s,\d}\delta^{\d/r_0-\s}
  =C_{\s,\d}\delta^\gamma.
\end{equation}
Young's inequality and the defining relation for $r_0$ give
\begin{equation}
  \norm{K_\delta*f}_{p'}
  \leq\norm{K_\delta}_{r_0}\norm{f}_p.
\end{equation}
Pairing with $f\in L^p$ proves \eqref{eq:near-diagonal}.
\end{proof}

\begin{lemma}[Tight continuity]\label{lem:Hartree-tight}
Suppose $f_n\geq0$ is bounded in $L^1\cap L^p$, the measures $f_n\diff x$ are tight, and
\begin{equation}
  f_n\weak f\quad\text{in }L^p.
\end{equation}
Then
\begin{equation}\label{eq:Hartree-tight}
  D(f_n)\to D(f).
\end{equation}
\end{lemma}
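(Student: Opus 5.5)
The plan is to split the Riesz kernel into a near-diagonal part and a regularized remainder. For $\delta>0$ write
\begin{equation*}
  |x|^{-\s}=K_\delta(x)+k_\delta(x),
  \qquad
  K_\delta:=|x|^{-\s}\one_{|x|<\delta},
  \qquad
  k_\delta(x):=\min\bigl(|x|^{-\s},\delta^{-\s}\bigr)-\delta^{-\s}\one_{|x|<\delta}+\ldots
\end{equation*}
More simply, write $k_\delta:=|x|^{-\s}\one_{|x|\geq\delta}$, which is bounded by $\delta^{-\s}$ and tends to zero at infinity, though it is not continuous across $|x|=\delta$. Since the pairing with $k_\delta$ must be treated through weak convergence, I would instead replace it by the continuous truncation $\tilde k_\delta(x):=\min(|x|^{-\s},\delta^{-\s})$. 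The error $|x|^{-\s}-\tilde k_\delta$ is then supported in $\{|x|<\delta\}$ and bounded by $K_\delta$. By \Cref{lem:near-diagonal} and the uniform $L^p$ bound (weak limits inherit $\norm{f}_p\leq\liminf\norm{f_n}_p$), the near-diagonal contributions to $D(f_n)$ and $D(f)$ are bounded by $C\delta^\gamma\sup_n\norm{f_n}_p^2$, uniformly in $n$. It therefore suffices to show, for each fixed $\delta$, that $\iint\tilde k_\delta(x-y)f_n(x)f_n(y)\to\iint\tilde k_\delta(x-y)f(x)f(y)$, and then to let $\delta\to0$.

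For the regularized part, the kernel $\tilde k_\delta$ is bounded and uniformly continuous and vanishes at infinity. I would further cut off at large distances: the measures $f_n\,dx$ are tight, so for $\epsilon>0$ there is a ball $B_L$ with $\int_{B_L^c}f_n<\epsilon$ for all $n$, and likewise for $f$, since $f\geq0$, $\int f\leq\liminf\int f_n$, and tightness passes to the weak limit when tested against $\one_{B_L^c}$ approximated from below by continuous functions. The contribution from pairs with $x$ or $y$ outside $B_L$ is then at most $2\delta^{-\s}\epsilon\sup_n\norm{f_n}_1$. On $B_L\times B_L$, I would approximate $\tilde k_\delta(x-y)$ uniformly by finite sums $\sum_j\phi_j(x)\psi_j(y)$ with $\phi_j,\psi_j\in C_c(\R^\d)$, using Stone--Weierstrass on $\overline{B_{2L}}\times\overline{B_{2L}}$ with a cutoff. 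The uniform error is controlled by $\sup_n\norm{f_n}_1^2$.

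For each product term, I would show $\int\phi_jf_n\to\int\phi_jf$. Weak convergence in $L^p$ gives convergence when testing against $L^{p'}$ functions, and $\phi_j\in C_c\subset L^{p'}$, so this is immediate. Hence $\int\phi_jf_n\int\psi_jf_n\to\int\phi_jf\int\psi_jf$. Collecting the three error terms and sending the finite-sum approximation error to zero, then $\epsilon\to0$, then $\delta\to0$, gives \eqref{eq:Hartree-tight}.

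I expect the main obstacle to be the bookkeeping that makes every error uniform in $n$: the near-diagonal piece via \Cref{lem:near-diagonal}, and the far piece via tightness together with $L^1$ boundedness. I also need to verify that the limit $f$ satisfies the same bounds, namely $f\in L^1$ with tight mass. This follows from lower semicontinuity of the $L^1$ norm under weak $L^p$ convergence, tested against nonnegative $C_c$ functions. The remaining steps are routine.
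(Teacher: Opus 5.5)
Your proof is correct and follows essentially the same route as the paper. It uses the same truncation $\min\{\delta^{-\s},|z|^{-\s}\}$, with the near-diagonal error handled by \Cref{lem:near-diagonal}, a spatial cutoff justified by tightness, and weak convergence for the remaining bounded continuous localized kernel. The only differences are minor: you prove that last convergence directly by Stone--Weierstrass tensor approximation instead of citing narrow convergence of product measures, and you control the spatial tail by $\tilde k_\delta\leq\delta^{-\s}$ rather than the mixed Hardy--Littlewood--Sobolev estimate. That tail bound forces you to remove the cutoff before sending $\delta\to0$, which you correctly do.
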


\begin{proof}
Weak $L^p$ convergence and tightness imply narrow convergence
\begin{equation}
  f_n\diff x\weak f\diff x,
\end{equation}
and hence narrow convergence of the product measures.

Choose $\theta_R\in C_c(\R^\d)$ with $0\leq\theta_R\leq1$ and $\theta_R=1$ on $B_R$, and define
\begin{equation}
  k_\delta(z):=\min\{\delta^{-\s},|z|^{-\s}\},
  \qquad
  K_{\delta,R}(x,y)
  :=\theta_R(x)\theta_R(y)k_\delta(x-y).
\end{equation}
The kernel $K_{\delta,R}$ is bounded and continuous, so its double integral converges under the product measures.

It remains to control the two truncations uniformly.  Let
\begin{equation}
  \vartheta:=\frac{\d-\s}{2\d}>0.
\end{equation}
By the mixed Hardy--Littlewood--Sobolev estimate and interpolation,
\begin{equation}\label{eq:tight-tail-control}
  D(g,h)
  \leq C_{\s,\d}\norm{g}_1^\vartheta
  \norm{g}_p^{1-\vartheta}\norm{h}_\sigma
\end{equation}
for nonnegative $g,h\in L^1\cap L^p$.  Applying this with $g=f_n\one_{B_R^c}$ and $h=f_n$ shows that the terms removed by $1-\theta_R(x)\theta_R(y)$ tend to zero uniformly in $n$ as $R\to\infty$.  The same holds for $f$.  The difference between $|x-y|^{-\s}$ and $k_\delta(x-y)$ is supported on $|x-y|<\delta$ and is uniformly bounded by \Cref{lem:near-diagonal}.  First, pass to the limit in $n$ for fixed $R,\delta$, and then let $\delta\downarrow0$ and $R\to\infty$.
\end{proof}

\begin{lemma}[Vanishing]\label{lem:Hartree-vanishing}
Let $f_n\geq0$ be bounded in $L^1\cap L^p$.  If for every fixed $R>0$,
\begin{equation}\label{eq:vanishing-condition}
  \sup_{a\in\R^\d}\int_{B_R(a)}f_n\longrightarrow0,
\end{equation}
then
\begin{equation}\label{eq:Hartree-vanishing}
  D(f_n)\longrightarrow0.
\end{equation}
\end{lemma}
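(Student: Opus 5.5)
We split the interaction into short-range, intermediate, and long-range parts and show each is small. Fix $0<\delta<R$ and write
\begin{equation*}
  D(f_n)=\iint_{|x-y|<\delta}+\iint_{\delta\leq|x-y|<R}+\iint_{|x-y|\geq R}\frac{f_n(x)f_n(y)}{|x-y|^\s}\diff x\diff y.
\end{equation*}

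\emph{Short range.} By \Cref{lem:near-diagonal}, the first term is at most $C_{\s,\d}\delta^\gamma\sup_n\norm{f_n}_p^2$. This is small uniformly in $n$ once $\delta$ is small.

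\emph{Long range.} On $|x-y|\ge R$ the kernel is at most $R^{-\s}$, so the third term is at most $R^{-\s}\sup_n\norm{f_n}_1^2$. This is small uniformly in $n$ once $R$ is large.

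\emph{Intermediate range.} Here the vanishing hypothesis \eqref{eq:vanishing-condition} enters, and this is the only step that uses it. On $\delta\le|x-y|<R$ the kernel is at most $\delta^{-\s}$, so the middle term is bounded by
\begin{equation*}
  \delta^{-\s}\int_{\R^\d}f_n(x)\Bigl(\int_{B_R(x)}f_n(y)\diff y\Bigr)\diff x
  \leq\delta^{-\s}\norm{f_n}_1\sup_{a\in\R^\d}\int_{B_R(a)}f_n.
\end{equation*}
For fixed $\delta$ and $R$, this tends to zero as $n\to\infty$ by \eqref{eq:vanishing-condition}.

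\emph{Conclusion.} For fixed $\delta$ and $R$,
\begin{equation*}
  \limsup_{n\to\infty} D(f_n)\le C\delta^\gamma+CR^{-\s}.
\end{equation*}
Letting $\delta\downarrow0$ and $R\to\infty$ gives \eqref{eq:Hartree-vanishing}.

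No real obstacle arises. The only care needed is to order the limits as above: the splitting parameters are fixed first, and the bounds from the first two steps must be uniform in $n$. They are, because they depend only on the uniform $L^1\cap L^p$ bounds.
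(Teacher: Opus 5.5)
Your proof is correct, but it is organized differently from the paper's. The paper splits only once, at $|x-y|=R$. The far field is bounded by $R^{-\s}\norm{f_n}_1^2$, exactly as you do. The entire near field $|x-y|<R$, singularity included, is handled by localization. The paper tiles $\R^\d$ with cubes $Q_k$ of side $R$ and notes that $x\in Q_k$, $|x-y|<R$ forces $x,y\in Q_k^*$, the concentric cube of side $3R$, whose overlap multiplicity is at most $3^\d$. It then applies the mass-weighted Hardy--Littlewood--Sobolev bound \eqref{eq:D-mass-L} to each $f_n\one_{Q_k^*}$. This yields the near-field estimate $C_{\s,\d}\bigl(\sup_a\int_{B_{\frac32\sqrt{\d}\,R}(a)}f_n\bigr)^{1-\s/\d}\Lcal(f_n)$.

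You instead introduce a second scale $\delta$. You absorb the singularity with the Young-inequality estimate of \Cref{lem:near-diagonal} and use only the pointwise bound $\delta^{-\s}$ on the shell $\delta\leq|x-y|<R$. Your route is more elementary: it needs no covering, no localization, and no HLS, only Young's inequality and trivial kernel bounds. The price is an extra parameter and a middle term that is linear in the local mass with a $\delta^{-\s}$ prefactor.

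The paper's single-scale form is convenient because the same display is reused in \Cref{prop:Hartree-profile}. There it bounds $\limsup_n D(r_n^{(J)})$ by $Cq_J^{1-\s/\d}+CR^{-\s}$, in a setting where the concentration function is merely small rather than vanishing. Your three-term bound would serve equally well there, giving $C\delta^\gamma+C\delta^{-\s}q_J+CR^{-\s}$, with the limits taken in the order $J\to\infty$, then $\delta\downarrow0$ and $R\to\infty$.
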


\begin{proof}
Fix $R>0$.  For $k\in\mathbb Z^\d$, let $Q_k:=R\bigl(k+[0,1)^\d\bigr)$ and $Q_k^*:=R\bigl(k+[-1,2)^\d\bigr)$.  Thus, $Q_k^*$ is the concentric enlargement of $Q_k$ from side length $R$ to side length $3R$.  If $x\in Q_k$ and $|x-y|<R$, then $x,y\in Q_k^*$.  Moreover, every point belongs to at most $3^\d$ cubes $Q_k^*$, and each $Q_k^*$ lies in a ball of radius $\frac32\sqrt{\d}\,R$.  By \eqref{eq:D-mass-L} and the overlap bound,
\begin{equation}\label{eq:vanishing-near}
  \iint_{|x-y|<R}\frac{f_n(x)f_n(y)}{|x-y|^\s}\diff x\diff y
  \leq C_{\s,\d}
  \left(\sup_a\int_{B_{\frac32\sqrt{\d}\,R}(a)}f_n\right)^{1-\s/\d}
  \Lcal(f_n).
\end{equation}
The contribution from $|x-y|\geq R$ is at most $R^{-\s}\norm{f_n}_1^2$.  For each fixed $R$, the near-field bound in \eqref{eq:vanishing-near} tends to zero as $n\to\infty$ by \eqref{eq:vanishing-condition}.  The uniform $L^1$ bound makes the far-field contribution arbitrarily small, uniformly in $n$, once $R$ is large.  Together, these estimates prove \eqref{eq:Hartree-vanishing}.
\end{proof}

\subsection{Density profiles under translations}

For a bounded sequence $f_n\geq0$ in $L^1$, set
\begin{equation}\label{eq:concentration-parameter}
  q\bigl[(f_n)_n\bigr]
  :=\lim_{R\to\infty}
  \left(\limsup_{n\to\infty}
  \sup_{a\in\R^\d}\int_{B_R(a)}f_n\right).
\end{equation}
This is the large-radius limiting value of the associated L\'evy concentration functions.

\begin{proposition}[Translation profiles]\label{prop:density-profiles}
Let $\rho_n\geq0$ satisfy
\begin{equation}\label{eq:density-bounds}
  \sup_n\left(\norm{\rho_n}_1+\Lcal(\rho_n)\right)<\infty.
\end{equation}
After passage to a subsequence, there exist at most countably many nonzero profiles $\rho^{(\alpha)}\in L^1\cap L^p$, with associated centers $a_n^{(\alpha)}\in\R^\d$, and, for every fixed $J$, a decomposition
\begin{equation}\label{eq:finite-decomposition}
  \rho_n
  =\sum_{\alpha=1}^J u_n^{(\alpha)}
  +r_n^{(J)}+e_n^{(J)},
\end{equation}
with nonnegative terms, such that:
\begin{enumerate}[label=\textup{(\roman*)}]
\item for $\alpha\neq\beta$,
\begin{equation}\label{eq:center-separation}
  |a_n^{(\alpha)}-a_n^{(\beta)}|\to\infty;
\end{equation}
\item $u_n^{(\alpha)}(\cdot+a_n^{(\alpha)})$ is tight,
\begin{equation}\label{eq:profile-convergence}
  u_n^{(\alpha)}(\cdot+a_n^{(\alpha)})\weak\rho^{(\alpha)}
  \quad\text{in }L^p,
  \qquad
  \int_{\R^\d}u_n^{(\alpha)}\to m_\alpha:=\int_{\R^\d}\rho^{(\alpha)};
\end{equation}
\item for every fixed $J$, the pairwise distances among $\supp u_n^{(1)},\ldots,\supp u_n^{(J)},\supp r_n^{(J)}$ tend to infinity as $n\to\infty$;
\item for fixed $J$,
\begin{equation}\label{eq:error-small}
  \norm{e_n^{(J)}}_1\to0;
\end{equation}
\item
\begin{equation}\label{eq:qJ-zero}
  q\bigl[(r_n^{(J)})_n\bigr]\to0\qquad(J\to\infty);
\end{equation}
\item the profiles satisfy
\begin{equation}\label{eq:profile-budgets-density}
  \sum_\alpha m_\alpha
  \leq\liminf_n\int_{\R^\d}\rho_n,
  \qquad
  \sum_\alpha\Lcal(\rho^{(\alpha)})
  \leq\liminf_n\Lcal(\rho_n).
\end{equation}
\end{enumerate}
In \eqref{eq:finite-decomposition}, we call $u_n^{(1)},\ldots,u_n^{(J)}$ the principal pieces, $r_n^{(J)}$ the remainder, and $e_n^{(J)}$ the error term.
\end{proposition}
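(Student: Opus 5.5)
The proof runs the standard Lions/Solimini iterative extraction, carried out at the level of $L^1\cap L^p$ densities and arranged so that the pieces are spatially separated. Set $\rho_n^{(0)}:=\rho_n$. At step $J$, suppose we already have $\rho_n=\sum_{\alpha\le J}u_n^{(\alpha)}+r_n^{(J)}+e_n^{(J)}$ with the stated properties, and compute $q_J:=q[(r_n^{(J)})_n]$.

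\emph{Termination.} If $q_J=0$, stop; then item (v) holds trivially.

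\emph{Extraction.} Otherwise, choose $R_J$ and a subsequence together with centers $a_n^{(J+1)}$ such that $\int_{B_{R_J}(a_n^{(J+1)})}r_n^{(J)}\ge q_J/2$. Since $r_n^{(J)}$ is supported far from the earlier supports, the separation condition \eqref{eq:center-separation} holds automatically for the new center. The translates $r_n^{(J)}(\cdot+a_n^{(J+1)})$ are bounded in $L^p$, so after passing to a subsequence they converge weakly in $L^p$ to some $\rho^{(J+1)}$. This limit is nonzero, because its mass on $\overline{B_{R_J}}$ is at least $q_J/2$ (test against the indicator of the ball, which lies in $L^{p'}$).

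\emph{Splitting off the profile.} Next apply the classical dichotomy argument. Choose radii $R_n\to\infty$ slowly, via a diagonal argument, so that two things hold. First, the mass of $r_n^{(J)}(\cdot+a_n^{(J+1)})$ on $B_{R_n}$ converges to $m_{J+1}:=\int\rho^{(J+1)}$; this is where the full concentration at that center is captured, since the concentration function stabilizes as $R\to\infty$. Second, the annulus $B_{R_n'}\setminus B_{R_n}$ carries vanishing mass for some $R_n'\gg R_n$, with $R_n'-R_n\to\infty$; we find it by pigeonholing dyadic annuli.

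With these radii, define
\begin{equation*}
  u_n^{(J+1)}:=r_n^{(J)}\one_{B_{R_n}(a_n^{(J+1)})},
  \qquad
  r_n^{(J+1)}:=r_n^{(J)}\one_{B_{R'_n}(a_n^{(J+1)})^c},
\end{equation*}
and put the annular piece into the error term, $e_n^{(J+1)}:=e_n^{(J)}+r_n^{(J)}\one_{\text{annulus}}$. Then (iii) and (iv) follow, and tightness of $u_n^{(J+1)}(\cdot+a_n^{(J+1)})$ and the mass convergence in (ii) follow from the choice of $R_n$.

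\emph{Separation from earlier profiles.} To see that the new center separates from the earlier centers: if $|a_n^{(J+1)}-a_n^{(\alpha)}|$ stayed bounded along a subsequence, then $B_{R_J}(a_n^{(J+1)})$ would lie in a bounded neighbourhood of $a_n^{(\alpha)}$. But $r_n^{(J)}$ vanishes on ever-growing balls around $a_n^{(\alpha)}$, which contradicts the mass lower bound $q_J/2$.

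\emph{Budgets (vi).} The pieces $u_n^{(1)},\dots,u_n^{(J)}$ have disjoint supports and are pointwise bounded by $\rho_n$. Hence $\sum_{\alpha\le J}\int u_n^{(\alpha)}\le\int\rho_n$ and $\sum_{\alpha\le J}\Lcal(u_n^{(\alpha)})\le\Lcal(\rho_n)$. Weak lower semicontinuity of the $L^p$ norm, together with the mass convergence in (ii), gives the two inequalities for every finite $J$; letting $J\to\infty$ gives (vi).

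\emph{Item (v).} This is the main obstacle, since it requires $q_J\to0$ and the extraction could a priori stall. I would first strengthen the extraction by choosing the pair $(R_J,a_n^{(J+1)})$ so that the extracted limit satisfies $m_{J+1}\ge q_J/2$. This is possible because the ball mass lower bound passes to the weak limit, as noted above. Since $\sum m_\alpha\le\sup_n\norm{\rho_n}_1<\infty$, we get $m_J\to0$ and hence $q_J\le 2m_{J+1}\to0$.

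\emph{Subsequences.} The nested subsequences are handled by a final Cantor diagonal extraction. After it, every finite-$J$ statement still holds along the diagonal subsequence, because each is a statement about $n\to\infty$ with $J$ fixed.
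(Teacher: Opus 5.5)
The skeleton of your argument matches the paper's proof. You iterate on $q[(r_n^{(J)})_n]$, pick centers capturing half of it, and take a weak $L^p$ limit whose mass is at least $q_J/2$, which is what drives (v). You then cut $r_n^{(J)}$ into an inner ball, an annulus sent to the error, and an exterior remainder. Center separation comes from $r_n^{(J)}$ vanishing on growing balls around earlier centers, and (vi) from disjointness and weak lower semicontinuity. All of this is fine.

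\textbf{The gap is the choice of the light annulus.} The annulus has to begin exactly at the radius $R_n$ where $\int_{B_{R_n}}r_n^{(J)}(\cdot+a_n^{(J+1)})\to m_{J+1}$. Pigeonholing the dyadic shells $B_{2^{k+1}R_n}\setminus B_{2^kR_n}$ only yields \emph{some} light shell $k=k_n$. It gives no control on the mass of $r_n^{(J)}$ in $B_{2^{k_n}R_n}(a_n^{(J+1)})\setminus B_{R_n}(a_n^{(J+1)})$. That mass can stay of order one, for instance from a second bump receding from $a_n^{(J+1)}$ at a distance between $R_n$ and $2^{k_n}R_n$. If you keep the inner cut at $R_n$, this mass goes into $e_n^{(J+1)}$ and (iv) fails. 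If you move the cut to $2^{k_n}R_n$, it goes into $u_n^{(J+1)}$, and both tightness and $\int u_n^{(J+1)}\to m_{J+1}$ in (ii) fail. In fact your first condition alone does not even guarantee that a suitable $R_n'$ exists: take $R_n$ one unit short of such a receding bump. So the outer radius must be controlled by the same mechanism as the inner one.

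\textbf{The fix.} The missing ingredient is that weak convergence, tested on fixed bounded sets, controls the outer radius too; no counting argument is needed. For each fixed $R$ you have $\int_{B_R}r_n^{(J)}(\cdot+a_n^{(J+1)})\to\int_{B_R}\rho^{(J+1)}$. A diagonal choice therefore gives $T_n\to\infty$ with $\int_{B_{T_n}}r_n^{(J)}(\cdot+a_n^{(J+1)})\to m_{J+1}$. Set $R_n:=T_n/2$ and $R_n':=T_n$. The mass on $B_{R_n}$ lies between $\int_{B_R}\rho^{(J+1)}-o(1)$ (for every fixed $R$) and the mass on $B_{T_n}$, so it also tends to $m_{J+1}$. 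Hence the annulus mass tends to $0$, and the annulus width $T_n/2$ tends to infinity.

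The paper does the same thing in a slightly different form. It takes $R_\ell$ with $\int_{|x|>R_\ell}\rho^{(j)}\leq 2^{-\ell}$ and sets $S_\ell=R_\ell+\ell$. It then tests the weak convergence on the fixed annulus $B_{S_\ell}\setminus B_{R_\ell}$, whose limiting mass is at most $2^{-\ell}$, and diagonalizes in $\ell$.

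With this replacement your proof is complete, after one further line you currently omit. You need $u_n^{(J+1)}(\cdot+a_n^{(J+1)})\weak\rho^{(J+1)}$ in $L^p$, which holds because $R_n\to\infty$ and $r_n^{(J)}$ is bounded in $L^p$.
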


\begin{proof}
We divide the construction into the extraction of one profile, the choice of a separating annulus, and the iteration of these two steps.

\smallskip\noindent\emph{Extraction of one profile.}
Set $r_n^{(0)}=\rho_n$, and at step $j$ write
\begin{equation}
  \widehat q_j:=q\bigl[(r_n^{(j-1)})_n\bigr].
\end{equation}
If $\widehat q_j=0$, stop.  Otherwise, choose $R_0$ so that
\begin{equation}
  \limsup_{n\to\infty}\sup_{a\in\R^\d}
  \int_{B_{R_0}(a)}r_n^{(j-1)}
  >\frac34\widehat q_j.
\end{equation}
Pass to a further subsequence, retaining all previously obtained convergences, along which the inner spatial supremum in the preceding display is $>\frac23\widehat q_j$.  Choose centers $a_n^{(j)}$ such that
\begin{equation}\label{eq:half-concentration}
  \int_{B_{R_0}(a_n^{(j)})}r_n^{(j-1)}
  \geq\frac12\widehat q_j.
\end{equation}
After a further weak $L^p$ extraction,
\begin{equation}\label{eq:raw-profile-limit}
  r_n^{(j-1)}(\cdot+a_n^{(j)})\weak\rho^{(j)}
  \quad\text{in }L^p.
\end{equation}
Positivity and the uniform $L^1$ bound imply, by testing against an increasing sequence of compactly supported cutoffs, that $\rho^{(j)}\in L^1$ and
\begin{equation}
  \int_{\R^\d}\rho^{(j)}\leq\liminf_n\int_{\R^\d} r_n^{(j-1)}.
\end{equation}
Testing against a cutoff which equals one on $B_{R_0}$ gives
\begin{equation}\label{eq:profile-mass-lower}
  m_j:=\int_{\R^\d}\rho^{(j)}\geq\frac12\widehat q_j>0.
\end{equation}

\smallskip\noindent\emph{Choice of a separating annulus.}
Choose $R_\ell\to\infty$ such that
\begin{equation}
  \int_{|x|>R_\ell}\rho^{(j)}\leq2^{-\ell},
\end{equation}
and set $S_\ell:=R_\ell+\ell$.  For each fixed $\ell$, testing \eqref{eq:raw-profile-limit} against $\one_{B_{R_\ell}}$ and $\one_{B_{S_\ell}\setminus B_{R_\ell}}$ gives
\begin{align}
  \int_{B_{R_\ell}(a_n^{(j)})}r_n^{(j-1)}
  &\longrightarrow\int_{B_{R_\ell}}\rho^{(j)},\\
  \int_{B_{S_\ell}(a_n^{(j)})\setminus
  B_{R_\ell}(a_n^{(j)})}r_n^{(j-1)}
  &\longrightarrow
  \int_{B_{S_\ell}\setminus B_{R_\ell}}\rho^{(j)}.
\end{align}
Choose $(n_\ell)_\ell$ strictly increasing so that, whenever $n\geq n_\ell$, the absolute error in each of these two limits is at most $2^{-\ell}$.  After discarding the finitely many indices $n<n_1$, set $\ell(n):=\max\{\ell:n_\ell\leq n\}$.  This yields radii
\begin{equation}
  R_n^{(j)}:=R_{\ell(n)},
  \qquad
  S_n^{(j)}:=S_{\ell(n)}
\end{equation}
for which
\begin{align}
  \left|
  \int_{B_{R_n^{(j)}}(a_n^{(j)})}r_n^{(j-1)}-m_j
  \right|
  &\leq2^{1-\ell(n)}\to0,
  \label{eq:inner-mass-limit}\\
  0\leq
  \int_{B_{S_n^{(j)}}(a_n^{(j)})\setminus
  B_{R_n^{(j)}}(a_n^{(j)})}r_n^{(j-1)}
  &\leq2^{1-\ell(n)}\to0,
  \label{eq:annulus-error}\\
  S_n^{(j)}-R_n^{(j)}
  &=\ell(n)\to\infty.
\end{align}
\begin{samepage}
\noindent Define
\begin{align}
  u_n^{(j)}
  &:=r_n^{(j-1)}\one_{B_{R_n^{(j)}}(a_n^{(j)})},\\
  e_n^{(j)}
  &:=r_n^{(j-1)}\one_{B_{S_n^{(j)}}(a_n^{(j)})
  \setminus B_{R_n^{(j)}}(a_n^{(j)})},\\
  r_n^{(j)}
  &:=r_n^{(j-1)}\one_{\R^\d\setminus
  B_{S_n^{(j)}}(a_n^{(j)})}.
\end{align}
\end{samepage}
Since $R_n^{(j)}\to\infty$, for every $\varphi\in L^{p'}$, where
$p'=p/(p-1)$,
\begin{equation}
  \left|\int_{\R^\d}(u_n^{(j)}-r_n^{(j-1)})(x+a_n^{(j)})\varphi(x)\diff x\right|
  \leq\norm{r_n^{(j-1)}}_{L^p}\norm{\varphi\one_{B_{R_n^{(j)}}^c}}_{L^{p'}}\longrightarrow0.
\end{equation}
Together with \eqref{eq:raw-profile-limit} and
\eqref{eq:inner-mass-limit}, this proves
\begin{equation}
  u_n^{(j)}(\cdot+a_n^{(j)})\weak\rho^{(j)}
  \quad\text{in }L^p,
  \qquad
  \int_{\R^\d} u_n^{(j)}\longrightarrow m_j,
\end{equation}
which is \eqref{eq:profile-convergence}.
For every fixed $R>0$, testing the weak $L^p$ convergence against $\one_{B_R}$ and then using convergence of the total masses gives
\begin{align}
  \int_{B_R}u_n^{(j)}(x+a_n^{(j)})\diff x
  &\longrightarrow\int_{B_R}\rho^{(j)}(x)\diff x,\\
  \int_{B_R^c}u_n^{(j)}(x+a_n^{(j)})\diff x
  &\longrightarrow\int_{B_R^c}\rho^{(j)}(x)\diff x.
\end{align}
Given $\varepsilon>0$, choose $R$ such that $\int_{B_R^c}\rho^{(j)}(x)\diff x<\varepsilon$.  The second limit bounds the translated tails by $2\varepsilon$ for all sufficiently large $n$; increasing $R$ to handle the finitely many remaining indices proves tightness.

Fix $j<k\leq J$.  Since the remainders are nested,
\begin{equation}
  \supp u_n^{(k)}\cup\supp r_n^{(J)}
  \subset\supp r_n^{(j)}
  \subset\R^\d\setminus B_{S_n^{(j)}}(a_n^{(j)}),
  \qquad
  \supp u_n^{(j)}
  \subset\overline B_{R_n^{(j)}}(a_n^{(j)}).
\end{equation}
Hence,
\begin{align}
  \operatorname{dist}
  \bigl(\supp u_n^{(j)},\supp u_n^{(k)}\bigr)
  &\geq S_n^{(j)}-R_n^{(j)}\longrightarrow\infty,\\
  \operatorname{dist}
  \bigl(\supp u_n^{(j)},\supp r_n^{(J)}\bigr)
  &\geq S_n^{(j)}-R_n^{(j)}\longrightarrow\infty.
\end{align}
Finally, let $R_{0,k}$ denote the fixed radius $R_0$ chosen at the
$k$-th extraction.  Since $r_n^{(k-1)}=0$ on
$B_{S_n^{(j)}}(a_n^{(j)})$, \eqref{eq:half-concentration} gives
\begin{equation}
  0<\frac12\widehat q_k
  \leq\int_{B_{R_{0,k}}(a_n^{(k)})}r_n^{(k-1)}
  =\int_{B_{R_{0,k}}(a_n^{(k)})\setminus
  B_{S_n^{(j)}}(a_n^{(j)})}r_n^{(k-1)}.
\end{equation}
Therefore,
\begin{equation}
  |a_n^{(k)}-a_n^{(j)}|
  \geq S_n^{(j)}-R_{0,k}\longrightarrow\infty,
\end{equation}
which proves the center separation.

\smallskip\noindent\emph{Iteration and the profile inequalities.}
For fixed $J$, set $e_n^{(J)}=\sum_{j=1}^Je_n^{(j)}$.  \Cref{eq:error-small} follows from \eqref{eq:annulus-error}.  For $k<j$, the principal piece $u_n^{(j)}$ is a restriction of the remainder $r_n^{(k)}$ and is therefore supported outside the outer ball $B_{S_n^{(k)}}(a_n^{(k)})$, whereas $u_n^{(k)}$ is supported in the corresponding inner ball $B_{R_n^{(k)}}(a_n^{(k)})$.  Hence, the extracted pieces are pairwise disjoint, and
\begin{equation}
  \sum_{j=1}^J\int_{\R^\d} u_n^{(j)}\leq\int_{\R^\d}\rho_n.
\end{equation}
Passing to the limit shows that the partial sums $\sum_{j=1}^Jm_j$ are uniformly bounded in $J$.  Since $\widehat q_j$ is nonincreasing and $m_j\geq\widehat q_j/2$, it follows that $\widehat q_j\to0$.  On the final diagonal subsequence, the quantity $q$ can only decrease, and hence
\begin{equation}
  q\bigl[(r_n^{(J)})_n\bigr]\leq\widehat q_{J+1}\to0.
\end{equation}

The corresponding local-energy inequality follows similarly from disjointness and weak lower semicontinuity:
\begin{equation}
  \sum_{j=1}^J\Lcal(\rho^{(j)})
  \leq\liminf_n\sum_{j=1}^J\Lcal(u_n^{(j)})
  \leq\liminf_n\Lcal(\rho_n).
\end{equation}
Let $J\to\infty$.  The standard diagonal extraction over the recursive steps completes the construction.
\end{proof}

\subsection{Exact Riesz--Hartree decoupling}

\begin{proposition}[Riesz--Hartree decoupling]\label{prop:Hartree-profile}
Under the assumptions of \Cref{prop:density-profiles}, one may pass to a further subsequence such that
\begin{equation}\label{eq:Hartree-profile}
  \lim_{n\to\infty}D(\rho_n)
  =\sum_{\alpha=1}^\infty D(\rho^{(\alpha)}).
\end{equation}
\end{proposition}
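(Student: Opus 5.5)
We prove \eqref{eq:Hartree-profile} by splitting $D(\rho_n)$ according to the finite decomposition \eqref{eq:finite-decomposition} at a fixed level $J$, then letting $n\to\infty$ and finally $J\to\infty$. Throughout we use bilinearity and the bounds \eqref{eq:HLS-interpolation} and \eqref{eq:D-mass-L}, together with the continuity and vanishing lemmas \Cref{lem:Hartree-tight,lem:Hartree-vanishing}. We first pass to a subsequence along which $D(\rho_n)$ converges; this is possible because $D(\rho_n)$ is bounded by \eqref{eq:D-mass-L}.

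\emph{Step 1: the error term.} Write $\rho_n=g_n^{(J)}+e_n^{(J)}$ with $g_n^{(J)}:=\sum_{\alpha\le J}u_n^{(\alpha)}+r_n^{(J)}$. Every piece is bounded by $\rho_n$ and is therefore bounded in $L^p$. By \eqref{eq:HLS-interpolation},
\begin{equation*}
|D(\rho_n)-D(g_n^{(J)})|\le 2D(e_n^{(J)},\rho_n)\le C\norm{e_n^{(J)}}_\sigma\norm{\rho_n}_\sigma\le C\norm{e_n^{(J)}}_1^{\frac{\d-\s}{2\d}}\to0
\end{equation*}
as $n\to\infty$, by \eqref{eq:error-small}.

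\emph{Step 2: the cross terms.} The supports of $u_n^{(1)},\dots,u_n^{(J)},r_n^{(J)}$ are pairwise at distance $d_n\to\infty$ by property (iii). Each cross term is therefore at most $d_n^{-\s}\norm{\rho_n}_1^2\to0$. Consequently,
\begin{equation*}
D(g_n^{(J)})=\sum_{\alpha\le J}D(u_n^{(\alpha)})+D(r_n^{(J)})+o(1).
\end{equation*}

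\emph{Step 3: the principal pieces.} $D$ is translation invariant, and $u_n^{(\alpha)}(\cdot+a_n^{(\alpha)})$ is tight and converges weakly in $L^p$ to $\rho^{(\alpha)}$ by \eqref{eq:profile-convergence}. Hence \Cref{lem:Hartree-tight} gives $D(u_n^{(\alpha)})\to D(\rho^{(\alpha)})$ for each fixed $\alpha$.

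\emph{Step 4: the remainder, which is the main obstacle.} We must show
\begin{equation*}
\lim_{J\to\infty}\limsup_{n\to\infty}D(r_n^{(J)})=0.
\end{equation*}
Property (v) says only that the large-radius concentration $q[(r_n^{(J)})_n]$ tends to zero, not that it vanishes for fixed $J$, so \Cref{lem:Hartree-vanishing} cannot be applied directly. We therefore quantify the proof of \Cref{lem:Hartree-vanishing}. For fixed $R$, the near-field part is bounded by $C(\limsup_n\sup_a\int_{B_{cR}(a)}r_n^{(J)})^{1-\s/\d}\sup_n\Lcal(\rho_n)$. Since the concentration function is nondecreasing in the radius, this is at most $C\,q[(r_n^{(J)})_n]^{1-\s/\d}$. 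The far-field part is at most $R^{-\s}\sup_n\norm{\rho_n}_1^2$. Letting first $n\to\infty$, then $R\to\infty$, and then $J\to\infty$ makes both parts vanish.

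\emph{Conclusion.} Combining Steps 1--4,
\begin{equation*}
\Bigl|\lim_n D(\rho_n)-\sum_{\alpha\le J}D(\rho^{(\alpha)})\Bigr|\le\limsup_n D(r_n^{(J)}).
\end{equation*}
The series $\sum_\alpha D(\rho^{(\alpha)})$ has nonnegative terms and converges, since $\sum_\alpha D(\rho^{(\alpha)})\le C\sum_\alpha\Lcal(\rho^{(\alpha)})<\infty$ by \eqref{eq:D-mass-L} and \eqref{eq:profile-budgets-density}. Letting $J\to\infty$ yields \eqref{eq:Hartree-profile}. If only finitely many profiles exist, the procedure stops at some $J$ with $\widehat q_{J+1}=0$, and the same argument applies at that fixed $J$.
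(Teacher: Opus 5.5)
Your proposal is correct and follows the paper's proof essentially step for step. Both arguments pass to a subsequence on which $D(\rho_n)$ converges and, at fixed $J$, split off the error term via the HLS--interpolation bound and the cross terms via support separation. Both then treat the principal pieces with \Cref{lem:Hartree-tight} and control the remainder by the quantified near-/far-field estimate from the vanishing lemma in terms of $q[(r_n^{(J)})_n]$, letting $n\to\infty$, then $R\to\infty$, then $J\to\infty$.
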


\begin{proof}
The sequence $D(\rho_n)$ is bounded by \eqref{eq:D-mass-L}, so after passing to a subsequence, we may assume it converges.  For fixed $J$, the separated supports, \eqref{eq:error-small}, and the mixed Hardy--Littlewood--Sobolev estimate imply
\begin{equation}\label{eq:finite-Hartree-split}
  D(\rho_n)
  =\sum_{\alpha=1}^JD(u_n^{(\alpha)})
  +D(r_n^{(J)})+o(1).
\end{equation}
Here, all interactions containing $e_n^{(J)}$ vanish as $n\to\infty$ because $\norm{e_n^{(J)}}_1\to0$, its $L^p$ norm is uniformly bounded, and interpolation makes its $L^\sigma$ norm tend to zero, where $\sigma=2\d/(2\d-\s)$.  The interactions between distinct principal pieces, and between a principal piece and the remainder, vanish by support separation and the uniform mass bound.  By \Cref{lem:Hartree-tight},
\begin{equation}\label{eq:u-Hartree-limit}
  D(u_n^{(\alpha)})\to D(\rho^{(\alpha)}).
\end{equation}

It remains to control the remainder uniformly as $J\to\infty$.  Set $q_J:=q\bigl[(r_n^{(J)})_n\bigr]$.  Since $0\leq r_n^{(J)}\leq\rho_n$, the near-field estimate \eqref{eq:vanishing-near} and the far-field bound immediately following that display give, for every $R>0$,
\begin{align}
  \limsup_{n\to\infty}D(r_n^{(J)})
  &\leq C_{\s,\d}
  \left(\limsup_{n\to\infty}\sup_{a\in\R^\d}
  \int_{B_{\frac32\sqrt{\d}\,R}(a)}r_n^{(J)}\right)^{1-\s/\d}
  \sup_n\Lcal(\rho_n)\notag\\
  &\qquad+R^{-\s}\sup_n\norm{\rho_n}_1^2\notag\\
  &\leq C_{\s,\d}
  \left(\sup_n\Lcal(\rho_n)\right)q_J^{1-\s/\d}
  +R^{-\s}\sup_n\norm{\rho_n}_1^2.
\end{align}
Letting $R\to\infty$ and then using $q_J\to0$ as $J\to\infty$ gives $\lim_{J\to\infty}\limsup_{n\to\infty}D(r_n^{(J)})=0$.  Thus, first letting $n\to\infty$ in \eqref{eq:finite-Hartree-split}, and then letting $J\to\infty$, we obtain the desired conclusion.
\end{proof}

\bibliographystyle{amsalpha}
\bibliography{lieb_oxford_fixed_N_riesz_existence_clean_20260817_205957_UTC}

\end{document}